\definecolor{Base02}{HTML}{1E3668}
\definecolor{Red}{HTML}{dc322f}
\definecolor{Magenta}{HTML}{d33682}
\definecolor{Violet}{HTML}{6c71c4}
\definecolor{Cyan}{HTML}{2aa198}
\definecolor{Green}{HTML}{859900}
\renewcommand{\Re}{\operatorname{Re}}
\renewcommand{\Im}{\operatorname{Im}}
\newtheorem{theorem}{Theorem}
\newtheorem{lemma}{Lemma}
\newtheorem{corollary}{Corollary}
\theoremstyle{definition}
\newtheorem{definition}{Definition}
\numberwithin{theorem}{section}
\numberwithin{lemma}{section}
\numberwithin{definition}{section}
\numberwithin{corollary}{section}
\newcommand{\cT}{*}
\renewcommand{\vec}{\mathbf}
\newcommand{\lan}{\textup{\textsf{lan}}}
\newcommand{\err}{\textup{\textsf{err}}}
\newcommand{\res}{\textup{\textsf{res}}}
\renewcommand{\d}{\mathrm{d}}
\newcommand{\F}{\mathsf{F}}
\newcommand{\R}{\mathbb{R}}
\newcommand{\fH}{f(\vec{H})}
\newcommand{\fHV}{f(\vec{H})\vec{V}}
\title{A posteriori error bounds for the block-Lanczos method for matrix function approximation}
\author{Qichen Xu\thanks{The University of Chicago, Chicago, Illinois, U.S.A, \href{mailto:qichenxu@uchicago.edu}{\texttt{qichenxu@uchicago.edu}}, corresponding author} \and
Tyler Chen\thanks{New York University, New York, New York, U.S.A, \href{mailto:tyler.chen@nyu.edu}{\texttt{tyler.chen@nyu.edu}}}}
\date{}
\begin{document}

\maketitle

\begin{abstract}
    We extend the error bounds from [SIMAX, Vol. 43, Iss. 2, pp. 787-811 (2022)] for the Lanczos method for matrix function approximation to the block algorithm. 
    Numerical experiments suggest that our bounds are fairly robust to changing block size and have the potential for use as a practical stopping criterion. 
    Further experiments work towards a better understanding of how certain hyperparameters should be chosen in order to maximize the quality of the error bounds, even in the previously studied block size one case.
\end{abstract}

\section{Introduction}
\label{sec:intro}

Lanczos-based methods have proven to be among the most effective algorithms for approximating $\fH\vec{v}$, where $f:\R\to\R$ is a scalar function, $\vec{H}$ is a $n\times n$ Hermitian matrix, and $\vec{v}$ is a vector.
As a result, a number of analyses aim to provide a posteriori error bounds which may be used as practical stopping criteria \cite{frommer_simoncini_08b,frommer_simoncini_09,ilic_turner_simpson_09,frommer_kahl_lippert_rittich_13,frommer_guttel_schweitzer_14a,frommer_schweitzer_15,chen_greenbaum_musco_musco_22a}.
In particular, for a large class of functions, \cite{chen_greenbaum_musco_musco_22a} shows a general approach to bounding the error for the approximation of the Lanczos function approximation algorithm (referred to as Lanczos-FA) to $\fH\vec{v}$ in terms of the error of Lanczos-FA used to approximate a fixed linear system $(\vec{H}-w \vec{I})\vec{x} = \vec{v}$.
In the case that $\vec{H}-w\vec{I}$ is positive definite, Lanczos-FA is mathematically equivalent to the well-known and optimal conjugate gradient algorithm \cite{greenbaum_97}, and the error of this linear system can then be accurately estimated or bounded using existing techniques. 
In fact, the residual can always be used as an estimate for the error of the approximate solution to the system $(\vec{H}-w \vec{I})\vec{x} = \vec{v}$.

Often, one would like to approximate a sequence $\fH\vec{v}_1, \ldots, \fH\vec{v}_b$ for some set of vectors $\{\vec{v}_1, \ldots, \vec{v}_b\}$.
Examples of applications in which such a situation arises include solving multiple systems of linear equations with varying right-hand sides, computing Lyapunov exponents for dynamical systems, low-rank and diagonal approximation of matrix functions, model order reduction, sampling Gaussian vectors, and studying the equilibrium thermodynamics of open quantum systems  \cite{dieci_vanvleck_95,ando_chow_saad_skolnick_12,barkouki_bentbib_jbilou_15,fika_mitrouli_roupa_16,li_marlin_16,birk_18,seki_yunoki_20,chen_hallman_23,chen_cheng_22}.
While it is possible to apply the standard Lanczos-FA to each product independently, it is often more efficient to apply a blocked version of Lanczos-FA to approximate $\fHV$ directly, where $\vec{V} = [\vec{v}_1, \ldots, \vec{v}_b]$ is a ``block-vector'' (i.e. a tall/skinny $n\times b$ matrix) \cite{golub_underwood_77,frommer_lund_szyld_17}.

In this paper, we extend the bounds of \cite{chen_greenbaum_musco_musco_22a} to the block-Lanczos-FA algorithm. 
In particular, we show that, for piece-wise analytic $f$, the error of the block-Lanczos-FA algorithm can be bounded by the product of (i) the error of the block-Lanczos-FA approximation to the block system $(\vec{H} - w\vec{I})\vec{X} = \vec{V}$ and (ii) a certain contour integral which can be approximated numerically from quantities made available by the block-Lanczos algorithm.
As in \cite{chen_greenbaum_musco_musco_22a}, the bounds depend on the choice of $w$ as well as a certain contour of integration $\Gamma$.

Since bounds and stopping criteria for block-Lanczos-FA are less studied than for standard Lanczos-FA, we believe that the present work provides a useful tool for practitioners using the block algorithm. 
We include a number of numerical experiments exploring the impact of block size on our bounds, as well as experiments that provide further intuition as to how hyper-parameters such as $w$ and $\Gamma$ should be chosen. 
These experiments are relevant for the block size one case as well and serve as a step towards addressing some of the practical questions raised in \cite{chen_greenbaum_musco_musco_22a}.

\subsection{Notation}
\label{sec:notation}

We denote vectors with bold lowercase Roman letters and matrices with bold uppercase Roman letters. Constants are denoted with unbolded lowercase Roman letters. Throughout, $b$ is the block size of a matrix, $k$ is the number of iterations of the block-Lanczos algorithm, and $\vec{E}_i$ is defined as the $(kb)\times b$ block matrix whose $i$-th $b\times b$ block is the identity matrix and zero otherwise.
We write $\lambda_{\textup{min}}$ and $\lambda_{\textup{max}}$ for the smallest and largest eigenvalues of $\vec{H}$ respectively.
The spectrum of a matrix $\vec{B}$ is denoted by $\Lambda(\vec{B})$.

We use the notation $\|\cdot\|_2$ and $\|\cdot\|_\F$ to respectively denote the two-norm and Frobenius norm of a vector or a matrix.
We will use $\| \cdot \|_{\vec{M}}$ to denote some fixed norm induced by a positive definite matrix $\vec{M}$ by the relation $\|\vec{X}\|_{\vec{M}} = \|\vec{M}^{1/2}\vec{X}\|_\F$.
In the case that $\vec{X}$ has a single column, this coincides with the standard definition of a matrix-induced norm.
In all cases, $\vec{M}$ will have the form $\vec{M} = h(\vec{H})$ for some function $h$.
Hence, for any function $g$ and matrices $\vec{X}, \vec{Y}$, such norms satisfy the inequality
\begin{align}
\label{eqn:norm_ineq}
    \|g(\vec{H}) \vec{X}\vec{Y}\|_{h(\vec{H})} 
    &= \| h(\vec{H})^{1/2} g(\vec{H}) \vec{X}\vec{Y} \|_\F
    = \| g(\vec{H}) h(\vec{H})^{1/2} \vec{X}\vec{Y} \|_\F
    \nonumber
    \\&\hspace{4em}\leq \| g(\vec{H}) \|_2 \| h(\vec{H})^{1/2} \vec{X} \|_\F \| \vec{Y} \|_2
    = \| g(\vec{H}) \|_2 \|  \vec{X} \|_{h(\vec{H})} \| \vec{Y} \|_2.
\end{align}

\subsection{The block-Lanczos method for matrix function approximation}

Given a matrix $\vec{V}\in \mathbb{R}^{n\times b}$, the degree $j$ block-Krylov subspace is defined as
\begin{equation*}
    \mathcal{K}_{j+1}(\vec{H},\vec{V}) := \operatorname{span}\{ \vec{V}, \vec{H}\vec{V}, \ldots, \vec{H}^j \vec{V} \}.
\end{equation*}
Here the span of a set of block vectors is interpreted as the span of the constituent columns.
The block-Lanczos algorithm \cite{underwood_75,golub_underwood_77}, which we described in \cref{alg:block_lanczos}, computes\footnote{For simplicity of exposition, we assume the block-Krylov subspace does not become degenerate; i.e., that the dimension of the matrix block-Krylov subspace $\mathcal{K}_{k+1}(\vec{H},\vec{V})$ is $(k+1)b$.} matrices $\{\overline{\vec{Q}}_j\}$, $j\leq k+1$ such that $\overline{\vec{Q}}_i^\cT \overline{\vec{Q}}_j = \vec{0}$ for all $i\neq j$ and $\overline{\vec{Q}}_i^\cT\overline{\vec{Q}}_i = \vec{I}$. 
Moreover, these matrices satisfy, for all $j\leq k+1$,
\begin{align*}
\operatorname{span}\{ \overline{\vec{Q}}_1, \overline{\vec{Q}}_2, \ldots, \overline{\vec{Q}}_j \}
= \mathcal{K}_{j+1}(\vec{H},\vec{V}).
\end{align*}
As a result, the block vectors $\{\overline{\vec{Q}}_j\}$ satisfy a symmetric block-tridiagonal recurrence
\begin{align}
\label{eqn:krylov_recurrence}
    \vec{H} \vec{Q}_k = \vec{Q}_k \vec{T}_k + \overline{\vec{Q}}_{k+1} \vec{B}_k \vec{E}_k^\cT
\end{align}
where 
\begin{align*}
    \vec{T}_k = \begin{bmatrix}
    \vec{A}_1 & \vec{B}_1^\cT \\
    \vec{B}_1 & \ddots & \ddots \\
    & \ddots & \ddots & \vec{B}_{k-1}^\cT \\
    &&\vec{B}_{k-1} & \vec{A}_k
    \end{bmatrix}
    ,&&
    \vec{Q}_k = \begin{bmatrix}
    | &|&  & |\\
    \overline{\vec{Q}}_1 & \overline{\vec{Q}}_2 & \cdots & \overline{\vec{Q}}_k \\
    | &|&  & |
\end{bmatrix}
\end{align*}
are block matrices of size $(bk)\times (bk)$ and $n\times (bk)$ respectively.
We also have the relation $\vec{V} = \overline{\vec{Q}}_1 \vec{B}_0$.
The $b\times b$ diagonal blocks $\{\vec{A}_i\}$ and the $b\times b$ upper triangular off-diagonal blocks $\{\vec{B}_i\}$ are also generated by the block-Lanczos algorithm.

The output of the block-Lanczos algorithm can be used to approximate $f(\vec{H}) \vec{V}$:
\begin{definition}
\label[definition]{def:block_LanczosFA}
The block-Lanczos method for matrix function approximation (block-Lanczos-FA) for $f(\vec{H}) \vec{V}$ is defined as
\begin{equation*}
    \label{eqn:fT}
    \lan_k(f) 
    := 
    \vec{Q}_k f(\vec{T}_k) \vec{E}_1 \vec{B}_0.
\end{equation*}
\end{definition}
In the case that $b=1$, the block-Lanczos-FA algorithm is the same as the well-known Lanczos-FA algorithm; see for instance \cite{saad_92}.

\begin{algorithm}
\caption{block-Lanczos algorithm}
\label{alg:block_lanczos}
\fontsize{10}{10}\selectfont
\begin{algorithmic}[1]
\Procedure{block-Lanczos}{$\vec{H}, \vec{V}, k$}
\State \( \overline{\vec{Q}}_1,\vec{B}_0 = \textsc{qr}(\vec{V}) \),
\For {\( j=1,2,\ldots,k \)}
    \State \( \vec{Z} = \vec{H} \overline{\vec{Q}}_{j} - \overline{\vec{Q}}_{j-1} \vec{B}_{j-1}^\cT \) \Comment{if $j=1$, $\vec{Z} = \vec{H} \overline{\vec{Q}}_1$}
    \State \( \vec{A}_j = \overline{\vec{Q}}_{j}^\cT \vec{Z}  \)
    \State \( \vec{Z} = \vec{Z} - \overline{\vec{Q}}_{j} \vec{A}_j \)
    \State optionally, reorthogonalize $\vec{Z}$ against $\overline{\vec{Q}}_0, \ldots, \overline{\vec{Q}}_{j-1}$
    \label{alg:reorth}
    \State \( \overline{\vec{Q}}_{j+1},\vec{B}_{j} = \textsc{qr}(\vec{Z}) \)
\EndFor
\State \Return $\{\overline{\vec{Q}}_j\}, \{\vec{A}_j\}, \{ \vec{B}_j \}$
\EndProcedure
\end{algorithmic}
\end{algorithm}

\subsection{Past work and existing error bounds}
\label{sec:past_work}

It is straightforward to show that for any polynomial $p$ with $\deg(p)<k$, \( p(\vec{H})\vec{V} = \lan_k(p) \).
That is, block-Lanczos-FA approximates low-degree polynomials exactly.\footnote{In fact, a somewhat more general statement in terms of matrix-polynomials is true \cite[Theorem 2.7]{frommer_lund_szyld_20}.}
Writing the smallest and largest eigenvalues of $\vec{H}$ as $\lambda_{\textup{min}}$ and $\lambda_{\textup{max}}$ respectively, the previous observation implies that block-Lanczos-FA satisfies the error bound:
\begin{equation}
\label{eqn:unif_bound}
    \| f(\vec{H}) \vec{V} - \lan_k(f) \|_2
    \leq 
    2 \|\vec{V}\|_2 \min_{\deg(p) < k} \left(
    \max_{x\in[\lambda_{\textup{min}},\lambda_{\textup{max}}]} | f(x) - p(x)| \right).
\end{equation}
This bound is well-known in the block size one case, and the proof is easily generalized to block sizes $b>1$ (we provide the proof in \cref{sec:appendix} for convenience).

While \cref{eqn:unif_bound} provides a certain optimality guarantee for block-Lanczos-FA, the bound often fails to capture the true behavior of block-Lanczos-FA. 
This means it is unsuitable for use as a practical stopping criterion.
At a high level, the main issue is that \cref{eqn:unif_bound} does not depend on the spectrum of $\vec{H}$ besides through $\lambda_{\textup{min}}$ and $\lambda_{\textup{max}}$ and does not depend on $\vec{V}$ except through $\|\vec{V}\|_2$.
The lack of dependence on spectral properties of $\vec{H}$ means the bound is unable to take into account properties such as clustered outlying eigenvalues which often result in accelerated convergence for Lanczos-based methods. 
A nice discussion of this phenomenon for the case of linear systems is given in \cite{carson_liesen_strakos_22} and a detailed discussion of the shortcomings of bounds like \cref{eqn:unif_bound} for Lanczos-FA can be found in \cite{chen_greenbaum_musco_musco_22a}.
The lack of dependence on $\vec{V}$ beside $\|\vec{V}\|_2$ means the bound does not depend meaningfully on the block size. However, in many (but not all) cases, the block-Lanczos-FA algorithm converges more quickly when the block size is larger. 

One case in which stronger guarantees can be easily derived is if $f(x) = (x-z)^{-1}$ and $\vec{H}-z\vec{I}$ is positive definite.
Indeed, observe that the solution to
\begin{equation*}
    \min_{\vec{X}\in \mathcal{K}_k(\vec{H},\vec{V})} \| (\vec{H}-z\vec{I})^{-1} \vec{V} - \vec{X} \|_{\vec{H}-z\vec{I}}
    = \min_{\vec{C}\in \mathbb{R}^{(bk)\times b}} \| (\vec{H}-z\vec{I})^{-1/2} \vec{V} - (\vec{H}-z\vec{I})^{1/2} \vec{Q}_k\vec{C} \|_{\F}
\end{equation*}
is obtained by solving $b$ independent least squares problems corresponding to the columns of $\vec{V}$.
This means the matrix $\vec{C}^\star$ minimizing the least squares problem on the right is
$\vec{C}^\star = (\vec{Q}_k^\cT(\vec{H}-z\vec{I})\vec{Q}_k)^{-1} \vec{Q}_k^\cT \vec{V}$.
Thus, the minimizer for the original problem can be written explicitly as
\begin{equation*}
    \vec{X}^\star
    = \vec{Q}_k\vec{C}^\star
    = \vec{Q}_k(\vec{Q}_k^\cT(\vec{H}-z\vec{I})\vec{Q}_k)^{-1} \vec{Q}_k^\cT \vec{V}
    = \vec{Q}_k (\vec{T}_k-z\vec{I})^{-1} \vec{Q}_k^\cT \vec{V}
    = \lan_k(f).
\end{equation*}
This optimality condition is well-known in the block size one case and implies that, for any block size $b$, block-Lanczos-FA is mathematically equivalent to the block-conjugate gradient algorithm \cite{oleary_80}.

The optimality of block-Lanczos-FA also implies that the convergence of any single column of the block algorithm is not slower than the single-vector variant applied to the corresponding column (at least when measured with respect to $\|\cdot\|_{\vec{H} - z\vec{I}}$).
Indeed, the block algorithm is optimal over the block-Krylov subspace $\mathcal{K}_k(\vec{H},\vec{V})$, which contains the Krylov subspace $\mathcal{K}_k(\vec{H},\vec{v}_i)$, for any column $\vec{v}_i$ of $\vec{V}$.

\begin{definition}
\label[definition]{def:err_and_res}
For all $z\in\mathbb C$ where $\vec{H}-z\vec{I}$ and $\vec{T}_k-z\vec{I}$ are invertible, we respectively define the error and residual block vector $\err_k(z)$ and $\res_k(z)$ as
\begin{align*}
    \err_k(z) &:= (\vec{H}-z\vec{I})^{-1} \vec{V} -  \vec{Q}_k (\vec{T}_k - z\vec{I})^{-1}  \vec{E}_1\vec{B}_0 \\
    \res_k(z) &:= \vec{V} - (\vec{H}-z\vec{I}) \vec{Q}_k (\vec{T}_k - z\vec{I})^{-1}  \vec{E}_1\vec{B}_0.
\end{align*}
\end{definition}

If $p$ is a polynomial with $\deg(p)<k$ then $p(\vec{H})\vec{V}$ is contained in $\mathcal{K}_k(\vec{H},\vec{V})$.
For Hermitian positive-definite $\vec{H}-z\vec{I}$, we then have the bound
\begin{align}
    \| \err_k(z) \|_{\vec{H}-z\vec{I}}
    &\leq \nonumber
    \min_{\deg(p)<k} \| (\vec{H}-z\vec{I})^{-1} \vec{V} - p(\vec{H})\vec{V} \|_{\vec{H}-z\vec{I}}
    \\&\leq \nonumber
    \min_{\deg(p)<k} \| (\vec{H}-z\vec{I})^{-1} - p(\vec{H}) \|_2 \| \vec{V} \|_{\vec{H}-z\vec{I}}
    \\&=  \| \vec{V} \|_{\vec{H}-z\vec{I}} \min_{\deg(p)<k} \max_{x\in\Lambda(\vec{H})} | (x-z)^{-1} - p(x) |.\label{eqn:minimax}
\end{align}
This bound depends strongly on the distribution of the eigenvalues of $\vec{H}$ and is typically far more representative of the true convergence of block-Lanczos-FA than \cref{eqn:unif_bound}.
Discussions on \cref{eqn:minimax} and why it is often more representative than \cref{eqn:unif_bound} are easy to find; see for instance \cite{carson_liesen_strakos_22} and the references within.

A posteriori error bounds and estimates for this setting are also simpler than the case of general $f$, even if $\vec{H}-z\vec{I}$ is not positive definite.
For instance, $\|\err_k(z)\|_{(\vec{H}-z\vec{I})^2} = \|\res_k(z)\|_\F$ is easily computed from the block-Lanczos-FA iterate.
The relationship between the error and residual block vector also allows us to derive the bound \begin{equation*}
    \|\err_k(w)\|_\F 
    \leq \| (\vec H-z\vec I)^{-1} \|_2 \|\res_k(z)\|_\F 
    = (\lambda_{\textup{min}}(\vec H) - z)^{-1} \|\res_k(z)\|_\F.
\end{equation*}
More precise bounds and estimates for other norms have been widely studied for the case $b=1$  \cite{strakos_tichy_02,meurant_tichy_18,estrin_orban_saunders_19,meurant_papez_tichy_21,meurant_tichy_22}.
Many of these techniques can be extended to the case $b>1$, and we provide an example of a simple error estimate for the $(\vec{H}-w\vec{I})$-norm in \cref{sec:cg_est}; see also \cite{schaerer_szyld_torrest_21}.

In order to derive a posteriori stopping criteria for Lanczos-FA on general $f$, it is common to use the fact that the output of the Lanczos-FA algorithm can be related to the error of Lanczos-FA used to approximate the solution $(\vec{H} - z\vec{I})^{-1}\vec{v}$ at varying values of $z$ \cite{frommer_simoncini_08b,frommer_simoncini_09,ilic_turner_simpson_09,frommer_kahl_lippert_rittich_13,frommer_guttel_schweitzer_14a,frommer_schweitzer_15,chen_greenbaum_musco_musco_22a,simunec_23}.
This allows a priori bounds such as \cref{eqn:minimax} or a posteriori bounds and estimates such as those discussed in the previous paragraph for Lanczos-FA on linear systems to be upgraded to bounds and estimates for general $f$.

\section{Analysis}
\label{sec:analysis}

Our main result is a generalization of \cite[Theorem 2.6]{chen_greenbaum_musco_musco_22a} for the block-Lanczos algorithm for matrix function approximation:
\begin{theorem}
\label{thm:main}
Fix a $n\times n$ Hermitian matrix $\vec{H}$ and $n\times b$ matrix $\vec{V}$.
Let $\vec{Q}_k$, $\vec{T}_k$, and $\vec{B}_0$ be outputs of \cref{alg:block_lanczos} as described in \cref{eqn:krylov_recurrence}.
Let $S\subset\mathbb{R}$ be any set with $\Lambda(\vec{H}) \subseteq S$, and for $w,z\in\mathbb{C}$, define $Q_S(w,z) := \sup_{x\in S} {|x-w|}/{|x-z|}$.
For $u\in\mathbb{C}$, define $\vec{C}(u) := -\vec E_k^\cT(\vec{T}_k-u\vec I)^{-1}\vec E_1\vec{B}_0$.
Finally, suppose $\Gamma$ is a union of simple closed curves that enclose $\Lambda(\vec H)$ and $\Lambda(\vec{T}_k)$, that $w\not\in \Lambda(\vec{T}_k)$, and that $f:\mathbb{C}\to\mathbb{C}$ is continuous on $\Gamma$ and analytic on the interior of each curve of $\Gamma$.
Then, with $\|\cdot\|$ indicating $\|\cdot\|_2$ or $\|\cdot\|_{h(\vec{H})}$ for some function $h:\R\to\R$ positive on $\Lambda(\vec{H})$, the block-Lanczos-FA iterate satisfies the error bound
\begin{equation*}
    \| f(\vec{H}) \vec{V} - \lan_k(f)\| 
    \leq \hspace{-.1em}
    \underbrace{\hspace{-.2em}\vphantom{ \bigg| }\left(  \frac{1}{2\pi} \oint_{\Gamma} |f(z)| \cdot Q_S(w,z) \cdot \| \vec{C}_k(w)^{-1} \vec{C}_k(z) \|_2 \cdot |\d{z}| \right)}_{\textup{integral term}}
    \cdot \hspace{-.4em} \underbrace{\vphantom{ \bigg| } \| \err_k(w) \|}_{\textup{linear system error}}\hspace{-.6em}.
\end{equation*}
\end{theorem}

\Cref{thm:main} allows us to bound the block-Lanczos FA error above by the product of an integral term and a linear system error term that can each be computed effectively numerically. 
Because the linear system term depends on spectral properties of $\vec{H}$ as well as the interaction between $\vec{H}$ and $\vec{V}$, \cref{thm:main} is able to capture the convergence of block-Lanczos-FA better than bounds such as \cref{eqn:unif_bound}.

In practice, the integral in \cref{thm:main} must be computed numerically.
Critically, however, the integrand \emph{does not depend on the dimension} $n$ of $\vec{H}$, and is therefore relatively cheap compared to to the costs of the block-Lanczos algorithm whenever $n$ is very large.

For each $z$, $\vec{C}_k(z)$ can be computed relatively cheaply from an eigen-decomposition of the $(kb)\times (kb)$ symmetric block-tridiagonal matrix $\vec{T}_k$.
Thus, the most costly part of evaluating the integrand is likely the norm-computation $\| \vec{C}_k(w)^{-1} \vec{C}_k(z) \|_2$ (although since this quantity is continuous in $z$, iterative methods like the power method and/or randomized methods like the randomized SVD \cite{halko_martinsson_tropp_11} may be effective for this task).

In the case $\vec{V}$ is a single vector ($b=1$), then \cref{thm:main} is  equivalent to a special case of \cite[Theorem 2.6]{chen_greenbaum_musco_musco_22a}.
In fact, in this case $\|\vec{C}_k(w)^{-1}\vec{C}_k(z)\|_2$ has a simple formula in terms of the eigenvalues of $\vec{T}_k$. Since the eigenvalues of $\vec{T}_k$ are known to interlace those of $\vec{H}$, this formula can be bounded a priori, resulting in a priori error bounds. 
For $b>1$ we were unable to derive a similarly simple formula or a priori bound for $\|\vec{C}_k(w)^{-1}\vec{C}_k(z)\|_2$ and hence have focused on a posteriori bounds.

Note that in the case $S$ is a single interval, $Q_S$ can be evaluated explicitly using \cite[Lemma 3.1]{chen_greenbaum_musco_musco_22a}:
\begin{lemma}
\label{thm:Q_wz_value}
    For any interval \( [a,b] \subset \mathbb{R} \), if \( z \in \mathbb{C} \setminus [a,b] \) and \( w\in\mathbb{R} \), we have
\begin{align*}
    Q_{[a,b]}(w,z)
    = \max \left\{ 
    \left| \frac{a-w}{a-z} \right|, 
    \left| \frac{b-w}{b-z} \right|,
    \left( \left| \frac{z-w}{\Im(z)} \right|~\text{ if } x^{\star} \in [a,b] ~\text{else}~0 \right)
    \right\},
\end{align*}
where 
\begin{align*}
    x^\star := \frac{\Re(z)^2 + \Im(z)^2 - \Re(z) w}{\Re(z)-w}.
\end{align*}
\end{lemma}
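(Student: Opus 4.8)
The plan is to turn the lemma into an elementary single-variable optimization. Since $w$ and $x$ are real, it is cleanest to maximize the square. For $x\in\mathbb{R}$ with $x\neq z$, set
\[
  g(x) := \frac{|x-w|^2}{|x-z|^2} = \frac{(x-w)^2}{(x-\Re(z))^2 + \Im(z)^2},
\]
so that $Q_{[a,b]}(w,z) = \bigl(\max_{x\in[a,b]} g(x)\bigr)^{1/2}$; the maximum is attained because $z\notin[a,b]$ makes the denominator positive on the compact interval $[a,b]$, so $g$ is a continuous (indeed smooth rational) function there. Consequently $g$ attains its maximum over $[a,b]$ either at an endpoint or at an interior critical point.

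First I would locate the interior critical points. Writing $N(x)=(x-w)^2$ and $D(x)=(x-\Re(z))^2+\Im(z)^2$, a short computation shows that the numerator of $g'(x)=(N'D-ND')/D^2$ factors as $2(x-w)\bigl[D(x)-(x-w)(x-\Re(z))\bigr]$. Hence an interior critical point is either $x=w$ — where $g=0$, which cannot be the maximum unless $[a,b]$ degenerates to the single point $\{w\}$, in which case the formula holds trivially — or a root of $D(x)=(x-w)(x-\Re(z))$. Expanding both sides, the $x^2$ terms cancel and one is left with the linear equation $(\Re(z)-w)x = \Re(z)^2+\Im(z)^2-\Re(z)w$, whose solution is precisely the $x^\star$ in the statement. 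When $z\in\mathbb{R}$ one has $x^\star=z\notin[a,b]$, and when $\Re(z)=w$ the linear equation is inconsistent; in both of these cases $g$ has no relevant interior critical point on $[a,b]$, which is exactly the ``else $0$'' branch. Therefore $\max_{x\in[a,b]}g = \max\bigl\{g(a),\,g(b),\,g(x^\star)\bigr\}$, where the last term is dropped unless $x^\star\in[a,b]$.

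It then remains to evaluate the three candidates. At the endpoints, since $a,b,w$ are real, $(a-w)^2=|a-w|^2$ and $(a-\Re(z))^2+\Im(z)^2=|a-z|^2$, so $g(a)=|(a-w)/(a-z)|^2$ and likewise for $b$. At $x^\star$ I would avoid substituting the explicit formula, using instead the critical-point identity $D(x^\star)=(x^\star-w)(x^\star-\Re(z))$, which collapses $g(x^\star)=(x^\star-w)^2/D(x^\star)$ to $(x^\star-w)/(x^\star-\Re(z))$; then the two one-line simplifications $x^\star-w=|z-w|^2/(\Re(z)-w)$ and $x^\star-\Re(z)=\Im(z)^2/(\Re(z)-w)$ give $g(x^\star)=|z-w|^2/\Im(z)^2$. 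Taking square roots (all quantities being nonnegative) yields exactly the claimed expression.

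There is no serious obstacle here: the only care needed is the bookkeeping in the degenerate cases where $x^\star$ is undefined or lies outside $[a,b]$, together with the observation that the critical value $g(w)=0$ is never the maximum except on a trivial single-point interval. The remaining work — the quotient-rule differentiation and the algebraic simplification of $g(x^\star)$ — is entirely routine.
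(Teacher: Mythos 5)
Your proof is correct; note that the paper does not prove this lemma itself but imports it from \cite[Lemma 3.1]{chen_greenbaum_musco_musco_22a}, and your argument is essentially that standard one: maximize $|x-w|^2/|x-z|^2$ over $[a,b]$ by comparing endpoint values with the unique nontrivial interior critical point $x^\star$, then simplify $g(x^\star)$ to $|z-w|^2/\Im(z)^2$. The only nitpick is the degenerate sub-case $z=w\in\mathbb{R}\setminus[a,b]$, where the linear equation becomes $0=0$ (every point is critical) rather than inconsistent; but there $g\equiv 1$ and the endpoint terms already yield the stated value, so the conclusion is unaffected.
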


We now introduce some notation and two auxiliary results which will be useful in proving \cref{thm:main}.

\begin{lemma}
\label[lemma]{thm:res_prop_Qk1}
For all $z\in\mathbb{C}\setminus\Lambda(\vec{T}_k)$, $\res_k(z) =  \overline{\vec{Q}}_{k+1} \vec{B}_k \vec{C}_k(z)$, where $\vec{C}_k(z)$ is as defined in \cref{thm:main}.
\end{lemma}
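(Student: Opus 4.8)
The plan is to compute $\res_k(z)$ directly from its definition in \cref{def:err_and_res}, using the block-Krylov recurrence \cref{eqn:krylov_recurrence} to handle the action of $\vec H$ on $\vec Q$. Recall $\res_k(z) = \vec V - (\vec H - z\vec I)\vec Q(\vec T - z\vec I)^{-1}\vec E_1\vec B_0$. First I would observe that $\vec V = \vec Q\vec E_1\vec B_0$, since $\vec Q_1\vec B_0$ is the QR factorization of $\vec V$ and $\vec Q\vec E_1 = \vec Q_1$. Substituting this in, the residual becomes $\vec Q\vec E_1\vec B_0 - (\vec H - z\vec I)\vec Q(\vec T - z\vec I)^{-1}\vec E_1\vec B_0$.

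The key step is to rewrite $(\vec H - z\vec I)\vec Q$ using \cref{eqn:krylov_recurrence}: we have $(\vec H - z\vec I)\vec Q = \vec H\vec Q - z\vec Q = \vec Q\vec T + \vec Q_{k+1}\vec B_k\vec E_k^\cT - z\vec Q = \vec Q(\vec T - z\vec I) + \vec Q_{k+1}\vec B_k\vec E_k^\cT$. Plugging this into the expression for $\res_k(z)$ gives
\begin{align*}
    \res_k(z) &= \vec Q\vec E_1\vec B_0 - \bigl(\vec Q(\vec T - z\vec I) + \vec Q_{k+1}\vec B_k\vec E_k^\cT\bigr)(\vec T - z\vec I)^{-1}\vec E_1\vec B_0 \\
    &= \vec Q\vec E_1\vec B_0 - \vec Q\vec E_1\vec B_0 - \vec Q_{k+1}\vec B_k\vec E_k^\cT(\vec T - z\vec I)^{-1}\vec E_1\vec B_0 \\
    &= -\vec Q_{k+1}\vec B_k\vec E_k^\cT(\vec T - z\vec I)^{-1}\vec E_1\vec B_0.
\end{align*}
Finally I would recognize $-\vec E_k^\cT(\vec T - z\vec I)^{-1}\vec E_1\vec B_0$ as exactly $\vec C(z)$ from the definition in \cref{thm:main}, so $\res_k(z) = \vec Q_{k+1}\vec B_k\vec C(z)$, as claimed. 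The hypothesis $z\notin\Lambda(\vec T)$ is used precisely to ensure $(\vec T - z\vec I)^{-1}$ exists so that all of these manipulations are well-defined.

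This is a short and essentially mechanical argument; there is no real obstacle. The only point requiring a moment's care is the identity $\vec V = \vec Q\vec E_1\vec B_0$ and the fact that $\vec Q\vec E_k = \vec Q_k$, which follow from the block structure of $\vec E_1$, $\vec E_k$ and the initialization $\vec Q_1\vec B_0 = \textsc{qr}(\vec V)$ in \cref{alg:block_lanczos}; one should also note that the block-tridiagonal recurrence \cref{eqn:krylov_recurrence} is stated in the excerpt, so it may be invoked directly rather than re-derived.
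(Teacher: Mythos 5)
Your argument is correct and is essentially the same as the paper's proof: shift the block-Lanczos recurrence \cref{eqn:krylov_recurrence} to write $(\vec H - z\vec I)\vec Q = \vec Q(\vec T - z\vec I) + \vec Q_{k+1}\vec B_k\vec E_k^\cT$, substitute into the definition of $\res_k(z)$, cancel using $\vec V = \vec Q\vec E_1\vec B_0 = \vec Q_1\vec B_0$, and identify the remaining factor with $\vec C(z)$. The only cosmetic difference is that you substitute $\vec V = \vec Q\vec E_1\vec B_0$ at the outset rather than cancelling it at the end.
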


We can then use this to relate the error and residual vectors systems $(\vec{H} - z\vec{I})\vec{X} = \vec{V}$ with those for $(\vec{H} - w\vec{I})\vec{X} = \vec{V}$.

\begin{definition}
    For any $w, z\in\mathbb{C}$, we define 
    \begin{align*}
        h_{z}(x) &:= \frac{1}{x-z}, \qquad
        h_{w, z}(x) &:= \frac{x-w}{x-z}.
    \end{align*} 
\end{definition}

\begin{corollary}
\label[corollary]{thm:err_res_shift}
For all $w\in\mathbb{C}$ where $\vec{C}_k(w)$ is invertible and for all $z\in\mathbb{C}\setminus\Lambda(\vec H)$,
\begin{align*}
    \err_k(z) &= h_{w,z}(\vec{H}) \err_k(w) \vec{C}_k(w)^{-1} \vec{C}_k(z)\\
    \res_k(z) &= \res_k(w) \vec{C}_k(w)^{-1} \vec{C}_k(z).
\end{align*}
\end{corollary}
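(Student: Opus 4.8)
The plan is to reduce the whole statement to \cref{thm:res_prop_Qk1} together with the elementary identity relating the error and residual block vectors. First I would record that
$\res_k(z) = (\vec H - z\vec I)\err_k(z)$
for every $z$ at which both sides are defined: this is immediate from \cref{def:err_and_res} by factoring $(\vec H - z\vec I)$ out of the definition of $\res_k(z)$, since $(\vec H-z\vec I)\,(\vec H-z\vec I)^{-1}\vec V = \vec V$. This is the only place where the structure of the iterate enters the error equation; everything else is bookkeeping.

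For the residual identity I would simply invoke \cref{thm:res_prop_Qk1}, which gives $\res_k(z) = \vec Q_{k+1}\vec B_k\vec C(z)$ and, applied at $w$, $\res_k(w) = \vec Q_{k+1}\vec B_k\vec C(w)$. Multiplying the latter on the right by $\vec C(w)^{-1}\vec C(z)$ (legitimate since $\vec C(w)$ is assumed invertible) collapses $\vec C(w)\vec C(w)^{-1}=\vec I$ and recovers $\vec Q_{k+1}\vec B_k\vec C(z) = \res_k(z)$. Here $\vec C(w)$ being invertible already presupposes $w\notin\Lambda(\vec T)$, and $\vec C(z)$ is defined whenever $z\notin\Lambda(\vec T)$, which is consistent with the standing hypotheses of \cref{def:err_and_res}.

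For the error identity I would combine the two observations above: $\err_k(z) = (\vec H - z\vec I)^{-1}\res_k(z) = (\vec H - z\vec I)^{-1}\res_k(w)\,\vec C(w)^{-1}\vec C(z) = (\vec H - z\vec I)^{-1}(\vec H - w\vec I)\,\err_k(w)\,\vec C(w)^{-1}\vec C(z)$, where the first and last equalities use $\res_k = (\vec H - \cdot\,\vec I)\err_k$ at $z$ and at $w$ respectively and require $z\notin\Lambda(\vec H)$. Since $h_{w,z}(x) = (x-w)/(x-z)$, functional calculus gives $h_{w,z}(\vec H) = (\vec H - w\vec I)(\vec H - z\vec I)^{-1} = (\vec H - z\vec I)^{-1}(\vec H - w\vec I)$, the two factors commuting as functions of $\vec H$; substituting yields the claimed formula.

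There is essentially no serious obstacle: the corollary is a short consequence of \cref{thm:res_prop_Qk1}. The only points needing a moment of care are (i) tracking which invertibility hypotheses ($w,z\notin\Lambda(\vec T)$, $z\notin\Lambda(\vec H)$, and $\vec C(w)$ nonsingular) are used in each step, and (ii) keeping track of sides — the factor $\vec C(w)^{-1}\vec C(z)$ multiplies the block vectors on the right while $h_{w,z}(\vec H)$ acts on the left, so no commutativity between these two factors is ever invoked.
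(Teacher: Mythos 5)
Your proposal is correct and follows essentially the same route as the paper's proof: it uses \cref{thm:res_prop_Qk1} to insert $\vec{C}(w)\vec{C}(w)^{-1}$ for the residual identity, and then converts residual to error via $\err_k(\cdot)=(\vec H-\cdot\,\vec I)^{-1}\res_k(\cdot)$ together with the functional-calculus factorization of $h_{w,z}(\vec H)$. Your extra bookkeeping of the invertibility hypotheses matches what the paper uses implicitly, so there is nothing to add.
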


Both \cref{thm:res_prop_Qk1,thm:err_res_shift} are well-known in the literature; see for instance \cite{frommer_lund_szyld_17,frommer_lund_szyld_20} (we provide the proofs in \cref{sec:appendix} for convenience).

\begin{proof}[Proof of \cref{thm:main}]
Since $\Gamma$ is a union of simple closed curves that encompasses $\Lambda(\vec H)$, and since $f$ is analytic on the interior of $\Gamma$ and continuous on the boundary, by the Cauchy integral formula, we find that 
\begin{equation}
f(\vec{H})\vec{V} = -\frac{1}{2\pi i}\oint_\Gamma f(z)(\vec{H}-z\vec{I})^{-1}\vec{V} \d z. \label{eqn:fAV_CIF}
\end{equation}
Since $\Gamma$ also encloses $\Lambda(\vec{T}_k)$, the block-Lanczos-FA approximation can be written as
\begin{equation}
\vec{Q}_kf(\vec{T}_k)\vec{E}_1\vec{B}_0 = -\frac{1}{2\pi i}\oint_\Gamma f(z)\vec{Q}_k(\vec{T}_k-z\vec I)^{-1} \vec{E}_1 \vec{B}_0\d{z}. \label{eqn:QfTE_CIF}
\end{equation}
Combining \cref{eqn:fAV_CIF,eqn:QfTE_CIF}, we find
\begin{align*}
    f(\vec{H}) \vec{V} - \vec{Q}_k f(\vec{T}_k) \vec{E}_1\vec{B}_0 &= -\frac{1}{2\pi i}\oint_{\Gamma}f(z)((\vec{H}-z\vec{I})^{-1}\vec{V} - \vec{Q}_k(\vec{T}_k-z\vec I)^{-1} \vec{E}_1 \vec{B}_0)\d z.
\end{align*}
Since $\lan_k(f) = \vec{Q}_k f(\vec{T}_k) \vec{E}_1 \vec{B}_0$, using basic properties of matrix norms and integrals, along with the inequality \cref{eqn:norm_ineq} for the case $\|\cdot\| = \|\cdot\|_{h(\vec{H})}$, 
\begin{align}
    \|f(\vec{H}) \vec{V} - \lan_k(f)\|
    \hspace{-4em}&\hspace{4em}= \left\|-\frac{1}{2\pi i}\oint_{\Gamma}f(z)\err_k(z)\d z\right\| \label{eqn:error_CIF}
    \\
    &\leq \frac{1}{2\pi} \oint_{\Gamma}\|f(z)\err_k(z)\|\cdot |\d z| \label{eqn:triangle_ineq}\\
    &= \frac{1}{2\pi} \oint_{\Gamma}|f(z)|\cdot \|h_{w,z}(\vec{H})\err_k(w)\vec{C}_k(w)^{-1} \vec{C}_k(z)\|\cdot |\d z| \nonumber\\
    &\leq 
    \label{eqn:final}\frac{1}{2\pi} \oint_{\Gamma}|f(z)|\cdot \|h_{w, z}(\vec{H})\|_2\cdot \|\vec{C}_k(w)^{-1} \vec{C}_k(z)\|_2\cdot |\d z|\cdot \|\err_k(w)\| . 
\end{align}
Finally, since $\Lambda(\vec{H})\subset S$, 
\begin{align*}
    \|h_{w, z}(\vec{H})\|_2
    &= \max_{x\in\Lambda(\vec{H})} \frac{|x-w|}{|x-z|}
    \leq \sup_{x\in S} \frac{|x-w|}{|x-z|}
    = Q_{S}(w,z).
\end{align*}
The result for the induced norm then follows by inserting this bound into \cref{eqn:final}.
\end{proof}

We remark that \cref{eqn:triangle_ineq} can be bounded or approximated directly using bounds/estimates for $\| \err_k(z) \|$.
If estimating $\|\err_k(z)\|$ involves computations that scale with the dimension $n$, approximating \cref{eqn:triangle_ineq} numerically could be significantly more expensive than the suggested bound in \cref{thm:main}.
In \cref{sec:experiments}, we show the quality of both bounds to illustrate the amount of slack introduced between them.

\subsection{Quadratic forms}
The diagonal entries of $\vec{V}^\cT f(\vec{H}) \vec{V}$ are quadratic forms involving the columns of $\vec{V}$.
These are widely used in a number of applications, with stochastic trace estimation being a particularly common example \cite{bai_fahey_golub_96,golub_meurant_09,schnack_richter_steinigeweg_20,meyer_musco_musco_woodruff_2021,chen_hallman_23,chen_cheng_22}.
It is common to approximate $\vec{V}^\cT f(\vec{H})\vec{V}$ with $\vec{V}^\cT \lan_k(f)$, and we can derive error bounds as
in \cref{thm:main} for an approximation similar to what is derived in \cite[\S6]{chen_greenbaum_musco_musco_22a}.
Since $\vec{H}$ is Hermitian, it is the case that $(\vec{H}-z\vec{I})^\cT = \vec{H}-\overline{z}\vec{I}$ and therefore that
\begin{equation*}
    \vec{V}^\cT(\vec{H}-z\vec{I})^{-1} = ((\vec{H}-\overline{z}\vec{I})^{-1}\vec{V})^\cT.
\end{equation*}
Then $h_{\overline{z}}(\vec{T}_k) = (\vec{T}_k-\overline{z}\vec{I})^{-1}$, so from \cref{def:block_LanczosFA} and \cref{def:err_and_res} we have
\begin{equation*}
    ((\vec{H}-\overline{z}\vec{I})^{-1}\vec{V})^\cT 
    = (\lan_k(h_{\overline{z}}) + \err_k(\overline{z}))^\cT.
\end{equation*}
Therefore, by \cref{thm:err_res_shift} the quadratic form error can be expanded to
\begin{align*}
    \vec{V}^\cT\err_k(z) &= \vec{V}^\cT(\vec{H}-z\vec{I})^{-1}\res_k(z)\\
    &= (\lan_k(h_{\overline{z}}) + \err_k(\overline{z}))^\cT\res_k(z)\\
    &= \lan_k(h_{\overline{z}})^\cT\res_k(z) + \err_k(\overline{z})^\cT\res_k(z).
\end{align*}
By \cref{def:block_LanczosFA}, we know that $\lan_k(h_{\overline{z}}) = \vec{Q}_k h_{\overline{z}}(\vec{T}_k) \vec{E}_1 \vec{B}_0$ so that $\lan_k(h_{\overline{z}})^\cT = \vec{B}_0^\cT\vec{E}_1^\cT h_{\overline{z}}(\vec{T}_k)^\cT\vec{Q}_k^\cT$. 
By \cref{thm:res_prop_Qk1}, we also know that $\res_k(z) =  \overline{\vec{Q}}_{k+1} \vec{B}_k \vec{C}_k(z)$. However, $\vec{Q}_k$ and $\overline{\vec{Q}}_{k+1}$ are orthogonal so that $\lan_k(h_{\overline{z}})^\cT\res_k(z) = \vec{0}$. 
Therefore, again using \cref{thm:res_prop_Qk1} we have that 
\begin{align*}
    \vec{V}^\cT\err_k(z) &= \err_k(\overline{z})^\cT\res_k(z)\\
    &= ((\vec{H}-\overline{z}\vec{I})^{-1}\res_k(\overline{z}))^\cT\res_k(z)\\
    &= ((\vec{H}-\overline{z}\vec{I})^{-1}\res_k(w)\vec{C}_k(w)^{-1}\vec{C}(\overline{z}))^\cT\res_k(w)\vec{C}_k(w)^{-1}\vec{C}_k(z).
\end{align*}
By definition, $\vec{C}(\overline{z}) = -\vec E_k^\cT(\vec{T}_k-\overline{z}\vec I)^{-1}\vec E_1\vec{B}_0$. 
Since $\vec{T}_k$ is real, we know that $(\vec{T}_k-\overline{z}\vec I)^{-1} = \overline{(\vec{T}_k-z\vec I)^{-1}}$ so that we have $\vec{C}(\overline{z}) = \overline{\vec{C}_k(z)}$.
This shows that $\|\vec{C}(\overline{z})\|_2 = \|\overline{\vec{C}_k(z)}\|_2$, so that
\[
\|\vec{V}^\cT\err_k(z)\|_2 
\leq \|(\vec{H}-\overline{z}\vec{I})^{-1}\|_2\cdot\|\res_k(w)\|_2^2\cdot\|\vec{C}_k(w)^{-1}\vec{C}_k(z)\|_2^2.
\] 
Define $\widetilde{Q}_S(z) := \sup_{x\in S} 1/|x-z|$.
Then, provided $\Lambda(\vec{H}) \subseteq S$, we have $\|(\vec{H}-\overline{z}\vec{I})^{-1}\|_2 = \|(\vec{H}-z\vec{I})^{-1}\|_2 \leq \widetilde{Q}_S(z)$.
Therefore, following the proof of \cref{thm:main}, we have
\begin{align}
    \nonumber\hspace{6em}\hspace{-6em}\|\vec{V}^\cT f(\vec{H}) \vec{V} - 
    \vec{V}^\cT \lan_k(f) \|_2 
    &= \left\|-\frac{1}{2\pi i}\oint_{\Gamma}f(z)\vec{V}^\cT\err_k(z)\d z\right\|_2\\
    &\label{eqn:QF_triangle}\leq \frac{1}{2\pi} \oint_{\Gamma}\|f(z)\vec{V}^\cT\err_k(z)\|_2\cdot |\d z| \\
    &\label{eqn:QF_main}\leq \bigg(\frac{1}{2\pi} \oint_{\Gamma}|f(z)|\cdot \widetilde{Q}_S(z) \cdot\|\vec{C}_k(w)^{-1}\vec{C}_k(z)\|_2^2\cdot |\d z| \bigg)\cdot\|\res_k(w)\|_2^2.
\end{align}
As with $Q_S(w,z)$, $\widetilde{Q}_S(z)$ can be easily computed when $S$ is a single interval \cite[Lemma 6.1]{chen_greenbaum_musco_musco_22a}:
\begin{lemma}
For any interval \( [a,b] \subset \mathbb{R} \), if \( z \in \mathbb{C} \setminus [a,b] \), we have
\[
\widetilde{Q}_S(z) = 
\begin{cases}
1/|\Im(z)| & \Re(z) \in [a,b], \\
1/|a-z| & \Re(z) < a, \\
1/|b-z| & \Re(z) > b.
\end{cases}
\]
\end{lemma}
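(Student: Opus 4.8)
The plan is to recognize that $\widetilde{Q}_S(z)$ with $S=[a,b]$ is nothing more than the reciprocal of the Euclidean distance from the point $z\in\mathbb{C}$ to the real interval $[a,b]$. Since $t\mapsto 1/t$ is decreasing on $(0,\infty)$ and $x\mapsto |x-z|$ is continuous on the compact set $[a,b]$, the supremum defining $\widetilde{Q}_{[a,b]}(z)$ is attained and equals
\[
\widetilde{Q}_{[a,b]}(z) = \sup_{x\in[a,b]} \frac{1}{|x-z|} = \frac{1}{\min_{x\in[a,b]} |x-z|} = \frac{1}{\operatorname{dist}(z,[a,b])}.
\]
The hypothesis $z\notin[a,b]$ guarantees $\operatorname{dist}(z,[a,b])>0$, so this quantity is finite. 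So the task reduces to computing $\operatorname{dist}(z,[a,b])$.

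For this I would argue by elementary calculus. Writing $z = \Re(z) + i\,\Im(z)$ and treating $x\in[a,b]$ as a real variable,
\[
|x-z|^2 = (x-\Re(z))^2 + \Im(z)^2,
\]
which is a convex quadratic in $x$ with unconstrained minimizer $x=\Re(z)$. A three-way case split then finishes the argument: if $\Re(z)\in[a,b]$ the minimizer is feasible and $\operatorname{dist}(z,[a,b]) = |\Im(z)|$; if $\Re(z)<a$ the quadratic is increasing on $[a,b]$, so the minimum is at $x=a$ and $\operatorname{dist}(z,[a,b])=|a-z|$; and symmetrically if $\Re(z)>b$ the minimum is at $x=b$ and $\operatorname{dist}(z,[a,b])=|b-z|$. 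Taking reciprocals in each case yields the stated formula.

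I do not anticipate any genuine obstacle here. The only points requiring a bit of care are (i) justifying that the supremum over the (open) condition $z\notin[a,b]$ is in fact a finite maximum, which follows from compactness of $[a,b]$ together with $\operatorname{dist}(z,[a,b])>0$, and (ii) getting the boundary cases of the three-way split on $\operatorname{sign}(\Re(z)-a)$ and $\operatorname{sign}(\Re(z)-b)$ right (noting the cases overlap consistently at $\Re(z)=a$ and $\Re(z)=b$, where $|a-z|=|\Im(z)|$ and $|b-z|=|\Im(z)|$ respectively). This statement also appears as \cite[Lemma 6.1]{chen_greenbaum_musco_musco_22a}; the short argument above is included for completeness.
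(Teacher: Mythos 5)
Your argument is correct: identifying $\widetilde{Q}_{[a,b]}(z)$ as the reciprocal of $\operatorname{dist}(z,[a,b])$ and minimizing the convex quadratic $(x-\Re(z))^2+\Im(z)^2$ over $[a,b]$ gives exactly the stated three-case formula, and your care about attainment and the boundary cases $\Re(z)=a,b$ is sound. The paper itself offers no proof, deferring to \cite[Lemma 6.1]{chen_greenbaum_musco_musco_22a}, and your short self-contained computation is the standard argument for that result, so there is nothing to add.
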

The bound \cref{eqn:QF_main} is very similar to the bound in \cref{thm:main}, except $Q_S(w, z)$ is replaced with $\widetilde{Q}_S(z)$, $\|\vec{C}_k(w)^{-1}\vec{C}_k(z)\|_2$ is replaced by $\|\vec{C}_k(w)^{-1}\vec{C}_k(z)\|_2^2$, and $\|\err_k(w)\|$ is replaced by $\|\res_k(w)\|_2^2$. 
Thus, we anticipate that the quadratic form will converge at a rate that is about twice as fast as the norm of the matrix function's error, mirroring the block size one case.
Note that we have also used the operator norm, which gives an entry-wise bound for the block-Lanczos-FA error.

\subsection{Finite precision arithmetic}
\label{sec:analysis_fp}

In exact arithmetic, \cref{alg:reorth} in \cref{alg:block_lanczos} is unnecessary as $\vec{Z}$ is already orthogonal to $\overline{\vec{Q}}_0, \ldots, \overline{\vec{Q}}_{j-1}$. 
However, in finite precision arithmetic, omitting reorthogonalization results in behavior very different than if reorthogonalization is used.
Even without reorthogonalization, algorithms such as Lanczos-FA are often still effective in practice. 
This has been established rigorously for the block size one case \cite{druskin_knizhnerman_91,musco_musco_sidford_18}.
In \cite[Section 5.1]{chen_greenbaum_musco_musco_22a} a rounding error analysis for error bounds similar to the one in this paper is provided for the block size one case.
However, such an analysis is predicated on Paige's analysis of the Lanczos algorithm in finite precision arithmetic \cite{paige_71,paige_76,paige_80}.
While we are not aware of any analyses similar to that of Paige for the block-Lanczos algorithm, it seems reasonable to assume that the outputs of the block-Lanczos algorithm will often satisfy a perturbed version of the block-three-term recurrence \cref{eqn:krylov_recurrence}.

Thus, we will perform an analysis of our bounds under the assumption that the outputs of the Lanczos algorithm run in finite precision arithmetic satisfy a perturbed three-term recurrence
\begin{align}
\label{eqn:krylov_recurrence_fp}
    \vec{H} \vec{Q}_k = \vec{Q}_k \vec{T}_k + \overline{\vec{Q}}_{k+1} \vec{B}_k \vec{E}_k^\cT + \vec{F}_k,
\end{align}
where the perturbation term $\vec{F}_k$ is \emph{assumed} to be small.
We remark that even without a priori bounds on the size of $\vec{F}_k$, the size of $\vec{F}_k$ can be computed after the algorithm has been run, and in all cases in which we have observed, $\vec{F}_k$ is indeed small. 

We now reproduce analogs of \cref{thm:res_prop_Qk1}, \cref{thm:err_res_shift}, \cref{thm:main} for the Lanczos algorithm in finite precision arithmetic.
The perturbed Lanczos factorization \cref{eqn:krylov_recurrence_fp} can be shifted into
\[(\vec H-z\vec I)\vec Q_k = \vec Q_k(\vec{T}_k-z\vec I) + \overline{\vec{Q}}_{k+1}\vec B_k\vec E_k^\cT + \vec F_k.\] 
Thus, substituting into \cref{def:err_and_res}, we have
\begin{align*}
    \res_k(z) &:= \vec{V} - (\vec{H}-z\vec{I}) \vec{Q}_k (\vec{T}_k - z\vec{I})^{-1}  \vec{E}_1\vec{B}_0\\
    &= \vec{V} - (\vec Q_k(\vec{T}_k-z\vec I) + \overline{\vec{Q}}_{k+1}\vec B_k\vec E_k^\cT + \vec F_k) (\vec{T}_k - z\vec{I})^{-1}  \vec{E}_1\vec{B}_0\\
    &= \vec{V} - \vec Q_k  \vec{E}_1\vec{B}_0 - \overline{\vec{Q}}_{k+1}\vec B_k\vec E_k^\cT(\vec{T}_k - z\vec{I})^{-1}  \vec{E}_1\vec{B}_0 - \vec F_k(\vec{T}_k - z\vec{I})^{-1}  \vec{E}_1\vec{B}_0.
\end{align*}
Since $\vec{E}_1 = \left[ \vec I_{b}, \vec 0, ..., \vec 0 \right]^\cT$, where $\vec I_{b}$ is the $b\times b$ identity matrix, we know that $\vec Q_k \vec E_1\vec{B}_0 = \vec Q_1\vec{B}_0 = \vec V$.  By substitution, we have
\begin{align*}
    \res_k(z) &= \vec{V} - \vec Q_k  \vec{E}_1\vec{B}_0 - \overline{\vec{Q}}_{k+1}\vec B_k\vec E_k^\cT(\vec{T}_k - z\vec{I})^{-1}  \vec{E}_1\vec{B}_0 - \vec F_k(\vec{T}_k - z\vec{I})^{-1}  \vec{E}_1\vec{B}_0\\
    &= -\overline{\vec{Q}}_{k+1}\vec B_k\vec E_k^\cT(\vec{T}_k - z\vec{I})^{-1}  \vec{E}_1\vec{B}_0 - \vec F_k(\vec{T}_k - z\vec{I})^{-1}  \vec{E}_1\vec{B}_0\\
    &= \overline{\vec{Q}}_{k+1}\vec B_k\vec{C}_k(z) - \vec F_k(\vec{T}_k - z\vec{I})^{-1}  \vec{E}_1\vec{B}_0.
\end{align*}
This closely matches \cref{thm:res_prop_Qk1}, with the additional term $ - \vec F_k(\vec{T}_k - z\vec{I})^{-1}  \vec{E}_1\vec{B}_0$ that scales with the size of the perturbation term $\vec F_k$. Furthermore, we have
\begin{align*}
    \res_k(z) &= \overline{\vec{Q}}_{k+1}\vec B_k\vec{C}_k(z) - \vec F_k(\vec{T}_k - z\vec{I})^{-1}  \vec{E}_1\vec{B}_0\\
    &= \overline{\vec{Q}}_{k+1}\vec B_k\vec{C}_k(w)\vec{C}_k(w)^{-1}\vec{C}_k(z) - \vec F_k(\vec{T}_k - z\vec{I})^{-1}  \vec{E}_1\vec{B}_0\\
    &= \res_k(w)\vec{C}_k(w)^{-1}\vec{C}_k(z) + (\vec F_k(\vec{T}_k - w\vec{I})^{-1}  \vec{E}_1\vec{B}_0\vec{C}_k(w)^{-1}\vec{C}_k(z) 
    \\& \hspace{20em} - \vec F_k(\vec{T}_k - z\vec{I})^{-1}  \vec{E}_1\vec{B}_0).
\end{align*}
This can be written into
\begin{equation}
\label{eqn:res_fp}
    \res_k(z) = \res_k(w)\vec{C}_k(w)^{-1}\vec{C}_k(z) + \vec f_k(w, z),
\end{equation}
where we have defined
\begin{equation}
\label{eqn:fk}
    \vec f_k(w, z) = \vec F_k(\vec{T}_k - w\vec{I})^{-1}  \vec{E}_1\vec{B}_0\vec{C}_k(w)^{-1}\vec{C}_k(z) - \vec F_k(\vec{T}_k - z\vec{I})^{-1}  \vec{E}_1\vec{B}_0.
\end{equation}
Similarly, since $\err_k(z) = (\vec{H}-z\vec{I})^{-1}\res_k(z)$, we have
\begin{align}
    \err_k(z) 
    &= (\vec{H}-z\vec{I})^{-1}\res_k(w)\vec{C}_k(w)^{-1}\vec{C}_k(z) + (\vec{H}-z\vec{I})^{-1}\vec f_k(w, z) \nonumber \\
    &= h_{w, z}(\vec{H})\err_k(w)\vec{C}_k(w)^{-1} \vec{C}_k(z) + (\vec{H}-z\vec{I})^{-1}\vec f_k(w, z).
    \label{eqn:err_fp}
\end{align}
Expressions \cref{eqn:res_fp,eqn:err_fp} are similar to those in \cref{thm:err_res_shift}, with the difference that for residual there is an additional additive term $\vec f_k(w, z)$ while for error there is an additional additive term $(\vec{H}-z\vec{I})^{-1}\vec f_k(w, z)$. 

Using this, we now derive an analogue of \cref{thm:main} under finite arithmetic. 
From \eqref{eqn:error_CIF}, with $\|\cdot\|$ indicating $\|\cdot\|_2$ or $\|\cdot\|_{h(\vec{H})}$ for some function $h:\R\to\R$ positive on $\Lambda(\vec{H})$, we have
\begin{align}
    \|f(\vec{H}) \vec{V} - \lan_k(f) \|
    &= \left\|-\frac{1}{2\pi i}\oint_{\Gamma}f(z)\err_k(z)\d z\right\| \nonumber
    \\&= \frac{1}{2\pi}\left\|\oint_{\Gamma}f(z)h_{w,z}(\vec{H})\err_k(w)\vec{C}_k(w)^{-1} \vec{C}_k(z)\d z\right. 
    \nonumber\\&\hspace{10em} \left.+ \oint_{\Gamma}f(z)(\vec{H}-z\vec{I})^{-1}\vec f_k(w, z)\d z\right\| \nonumber
    \\&\leq \frac{1}{2\pi}\left\|\oint_{\Gamma}f(z)h_{w,z}(\vec{H})\err_k(w)\vec{C}_k(w)^{-1} \vec{C}_k(z)\d z\right\| 
    \label{eqn:main_fp}
    \\\nonumber&\hspace{10em}+ \frac{1}{2\pi}\left\|\oint_{\Gamma}f(z)(\vec{H}-z\vec{I})^{-1}\vec f_k(w, z)\d z\right\|.
\end{align}
The first term in \cref{eqn:main_fp} is identical to a term bounded in the proof of \cref{thm:main}.
Thus, in finite precision arithmetic, we obtain a bound similar to \cref{thm:main} with an additional integral term given by \[\frac{1}{2\pi}\left\|\oint_{\Gamma}f(z)(\vec{H}-z\vec{I})^{-1}\vec f_k(w, z)\d z\right\|.\]
Note that $\vec{f}_k(w,z)$, defined in \cref{eqn:fk}, depends linearly on $\vec{F}_k$.
Thus, if $\vec{F}_k$ is small then this term will be small; i.e. \cref{thm:main} holds to a close degree under finite precision arithmetic.

\section{Numerical Experiments}\label{sec:experiments}
Throughout this section, in our numerical experiments, we will compute $\|\err_k(z)\|$ directly using the spectral decomposition of $\vec H$, thus allowing us to shift our focus onto the evaluation of the integral term and the quantitative behavior of our bound. In practice, $\|\err_k(z)\|$ can be bounded or estimated using the techniques mentioned in \cref{sec:past_work}. Furthermore, we will use $h(\vec H) = \vec{H}-w\vec{I}$ for a given $w$ as the function for the induced norm, namely $\|\cdot\| = \|\cdot\|_{\vec{H}-w\vec{I}}$, unless specified otherwise.

We compute integral with SciPy's \texttt{integrate.quad} integrator, which is a wrapper for QUADPACK routines. 
In most cases, we let the integrator choose the points to evaluate the integral automatically. 
In some situations, the integrand may have most of its mass near a single point, and we pass a series of breakpoints to the integrator to obtain a more accurate value.
In our experiments we take $S = [\lambda_{\textup{min}},\lambda_{\textup{max}}]$ in which case $Q_S(w,z) := \sup_{x\in S}|x-w|/|x-z|$ can be computed efficiently using \cref{thm:Q_wz_value}. 
While $\lambda_{\textup{min}}$ and $\lambda_{\textup{max}}$ are typically unknown, they can be estimated from the eigenvalues of $\vec{T}_k$.
We focus on the induced norm $\|\cdot\|_{h(\vec{H})}$ (which is similar to a weighted Frobenius norm) rather than the operator norm $\|\cdot\|_2$, as most of the use-cases of block-Lanczos-FA which we are familiar with tend to have the error distributed uniformly across the columns of the output.
Regardless, due to the equivalence of the operator and Frobenius norms, they cannot differ significantly if the block size $b$ is not too large and $h(\vec{H})$ is not too poorly conditioned.

\subsection{Impact of block size} \label{sec:BS}

\begin{figure}
    \centering
    \includegraphics[scale = .6]{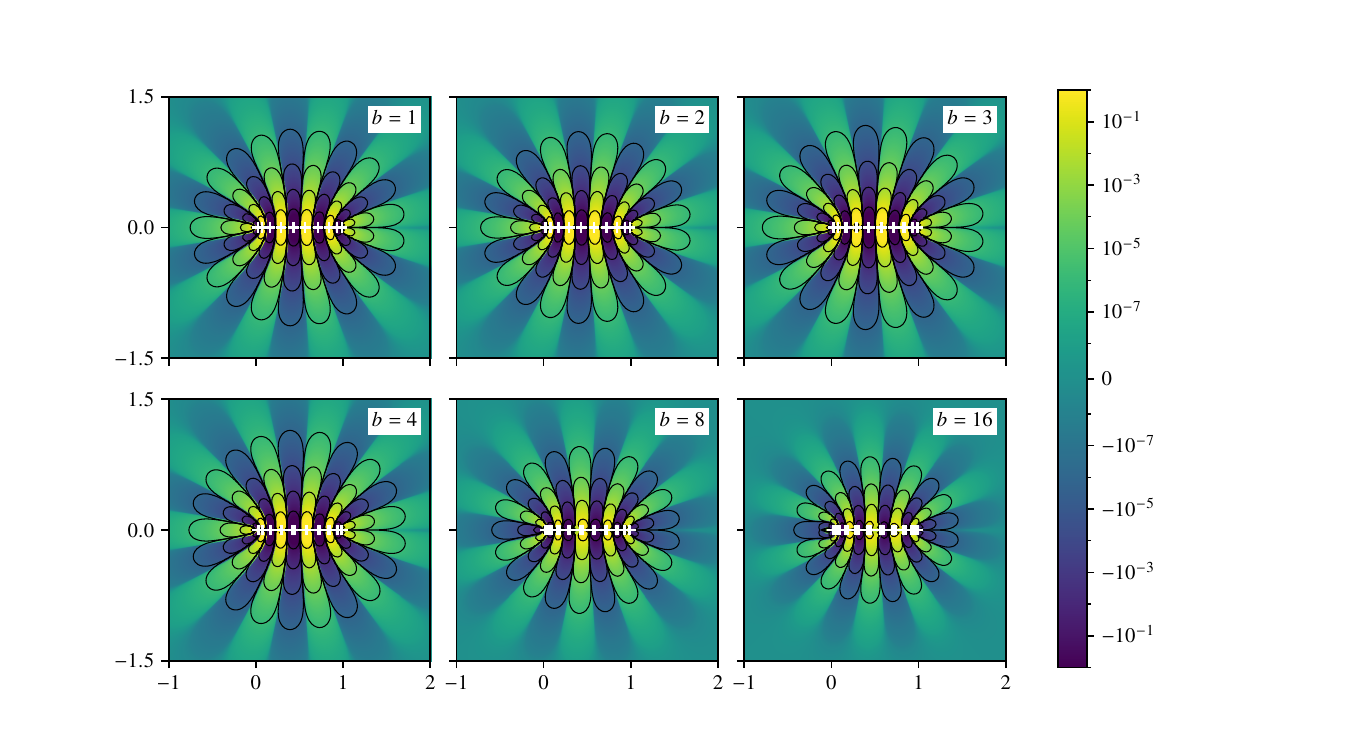}
    \caption{Imaginary part of the top-left entry of $f(z)\err_k(z)$ near the spectrum of $\vec H$ for block size $b = 1, 2, 3, 4, 8, 16$.
    Note that larger oscillations will result in more slack in the triangle inequality used between \cref{eqn:error_CIF} and \cref{eqn:triangle_ineq}.}
    \label{fig:im}
\end{figure}

There are two main sources of slack in our bound.
The first is from \cref{eqn:error_CIF} to \cref{eqn:triangle_ineq} due to losses associated with bounding the norm of an integral with the integral of a norm, and the second is from \cref{eqn:triangle_ineq} to the result of \cref{thm:main} due to potential slackness in a sub-multiplicative bound.
With block size introduced to the Lanczos algorithm, an interesting question that arises is the impact of block size on the bound given by \cref{thm:main}. 
In our first experiment, we illustrated relevant quantities for a range of block sizes to illustrate how the block size impacts our bound. For experiments in this section, we use the square root function $f(x) = \sqrt x$. We take $\vec{V}$ to be an $n\times b$ block-vector with independent standard Gaussian entries, and $w = 0$. We set $\vec{H}$ to be a $1000\times 1000$ diagonal matrix with linearly spaced diagonal elements between $10^{-2}$ and 1. We will investigate the slackness in our bound across block sizes $b = 1, 2, 3, 4, 8, 16$. Reorthogonalization is used.

We first study the loss from the triangle inequality between \cref{eqn:error_CIF} and \cref{eqn:triangle_ineq}. 
The error introduced by triangle inequality is larger when the function is more oscillatory over the contour. 
In fact, by the Cauchy--Schwarz inequality, \cref{eqn:error_CIF} and \cref{eqn:triangle_ineq} would be identical only if both real and imaginary parts of the integrand of \cref{eqn:error_CIF} were constant at every point on the contour.

In \cref{fig:im}, we plot the imaginary part of the top-left entry of $f(z)\err_k(z)$ on $-1 \leq \Re(z) \leq 2$ and $0\leq \Im(z) \leq 1.5$, and contour levels of $f(z)\err_k(z)$ at various levels.
From \cref{fig:im}, we observe that the sign of this entry of the integrand of \cref{eqn:error_CIF} oscillates as rays shoot out of eigenvalues of $\vec H$. Furthermore, the oscillation has a larger magnitude near the $\Lambda(\vec H)$, and the magnitude decreases as we move away from $\Lambda(\vec H)$. 
Most importantly, the qualitative behavior of the plot is similar across block sizes, so block size is not a factor that significantly impacts how we should choose the contour $\Gamma$ to reduce the error introduced by the triangle inequality. 
The real and imaginary parts of other entries of $f(z)\err_k(z)$ exhibit similar behavior.

\begin{figure}
    \centering
    \includegraphics[scale=.6]{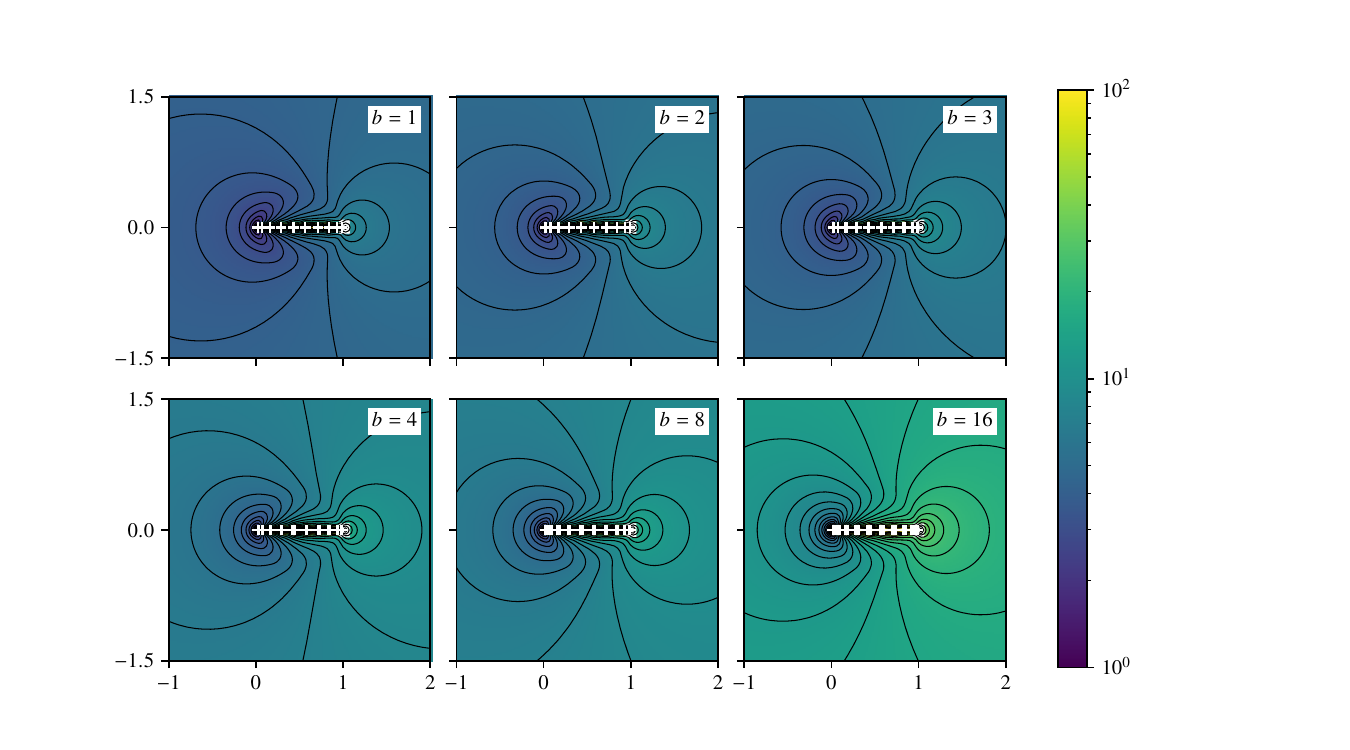}
    \caption{Ratio $T(z)$ defined in \cref{eqn:slack ratio} (with $w = \lambda_{\textup{min}}/100$) near the spectrum of $\vec H$ for block sizes $b = 1, 2, 3, 4, 8, 16$.
    Note that larger values of $T(z)$ result in more slack between \cref{eqn:triangle_ineq} and the result of \cref{thm:main}.}
    \label{fig:slack}
\end{figure}

To evaluate the difference between the bound defined in \cref{eqn:triangle_ineq} and \eqref{eqn:final}, we define 
\begin{equation}
\label{eqn:slack ratio}
T(z) = {\|h_{w,z}(\vec{H})\|_2 \| \vec{C}_k(w)^{-1} \vec{C}_k(z) \|_2 \| \err_k(w) \|_{\vec{H}-w\vec{I}}}{\|\err_k(z) \|_{\vec{H}-w\vec{I}}^{-1}}.
\end{equation}
The value of $T(z)$ at a point $z$ is the ratio between the integrands of \cref{eqn:triangle_ineq} and \eqref{eqn:final}.
If $T(z) = 1$ for all points on the contour, then \cref{eqn:triangle_ineq} and \cref{eqn:final} would be identical. Since \cref{eqn:final} is identical to the result of \cref{thm:main}, this ratio at a value $z$ offers us insights into the local difference between our bound and \cref{eqn:triangle_ineq}.

We plot the ratio $T(z)$ under the same condition as \cref{fig:im}. 
Quantitatively, as block size $b$ increases, the ratio increases. This indicates that large block sizes render our error bound less precise under the same contour and choice of $w$, but the optimal contour $\Gamma$ is similar across various block sizes $b$. Furthermore, the slack ratio has a singularity and is large near the eigenvalue of $\vec{H}$, while a small ratio is observed near $w = 0$. As we move away from eigenvalues of $\vec H$ or toward $w = 0$, the ratio decreases. 

\subsection{Impact of contour}

\begin{figure}
    \centering
    
    \begin{tikzpicture}
        \draw[thick] (0,0) -- ([shift=(-160:2cm)]0,0);
        \draw[thick] (0,0) -- ([shift=(160:2cm)]0,0);
        \begin{scope}[shift={(70:-.2cm)}]
            \draw[<->] (0,0) -- node [midway, below left] {$R$} ([shift=(160:2cm)]0,0);
        \end{scope}
        \draw[dotted] (0,0) -- (70:-.5cm);
        \draw[dotted] ([shift=(160:2cm)]0,0) -- ([shift=(160:2cm)]70:-.5cm);
        \draw[thick] ([shift=(-160:2cm)]0,0) arc (-160:160:2cm);
        \draw[<->] ([shift=(0:.5cm)]0,0) arc (0:160:.5cm) node [midway,above right] {$\Theta$};
        \fill (0,0) circle (2pt) node [below right] {$O$};
        \draw[dotted] (0,0) -- (1,0);
        \end{tikzpicture}
    \caption{Pac-Man contour with parameters $R$, $\Theta$, $O$}
    \label{fig:pacman}
\end{figure}
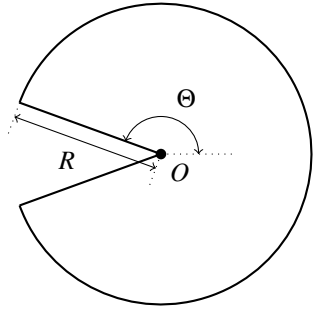

\begin{figure}
    \centering
    \includegraphics[scale=.6]{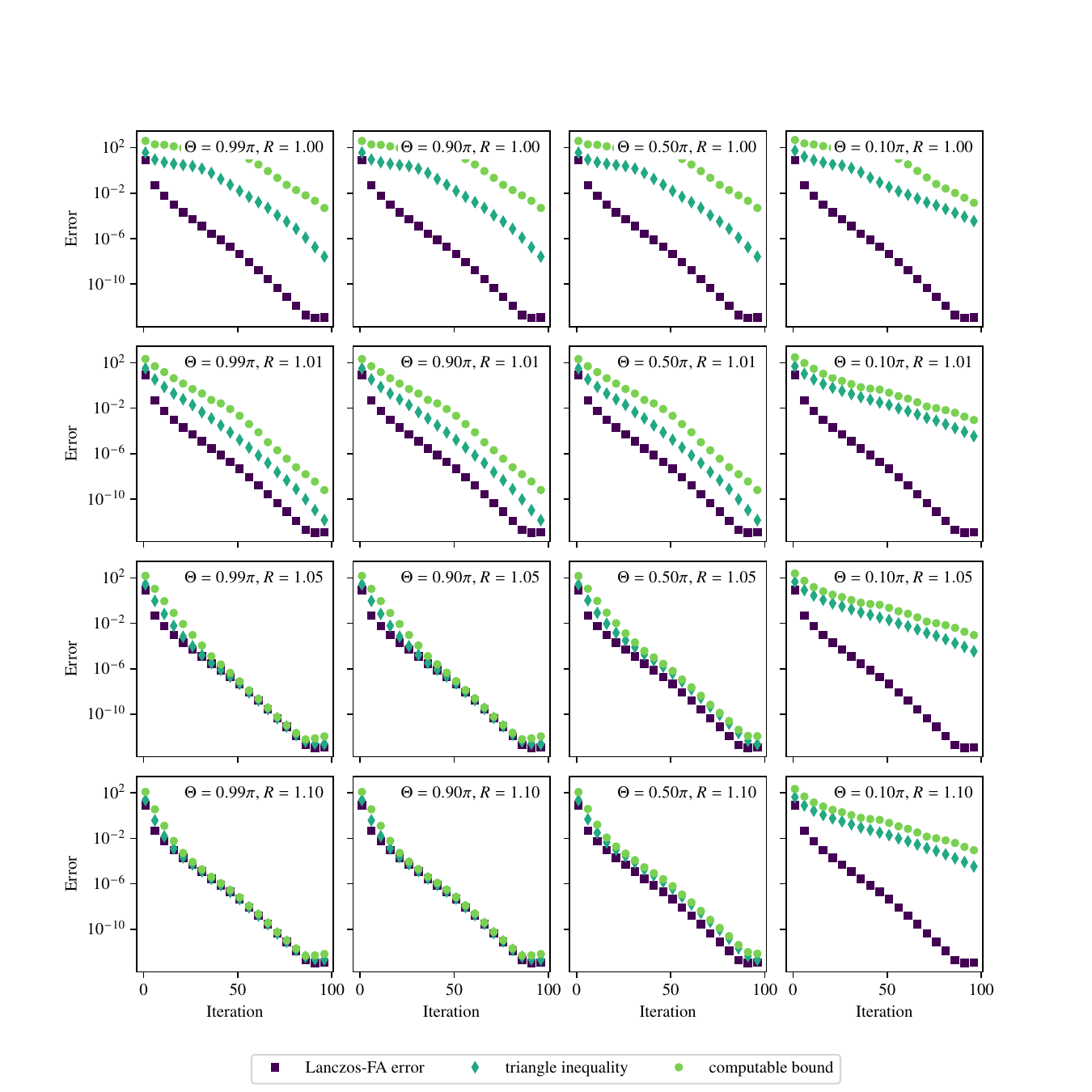}
    \caption{Performance of the Pac-Man contour across various parameters $R$ and $\Theta$ for $f(x) = \sqrt{x}$.
    The block-Lanczos-FA error is measured with $w = 0$.
    The bound ``triangle inequality'' is \cref{eqn:triangle_ineq} and the bound ``computable bound'' is \cref{thm:main}, both evaluated numerically.
    When $R$ and $\Theta$ are both reasonably large, the quality of the bounds is extremely good, but for some parameter choices, the bounds may have significant slack.}
    \label{fig:RvT}
\end{figure}

We now explore a family of contours and observe their impact on our computable bound in \cref{thm:main}. Specifically, we will be focusing on the ``Pac-Man contour'', which is the section of the circle $Re^{i\theta}+O$ with $-\Theta\leq \theta\leq \Theta$ and the lines between $O$ and $Re^{\pm i{\Theta}}$, for some $0< \Theta< \pi$. 
This contour is shown in \cref{fig:pacman}.
To visualize the slackness of this family of contours, we plot the block-Lanczos-FA error \cref{eqn:fAV_CIF}, triangle inequality error \cref{eqn:triangle_ineq}, and our computable bound in \cref{thm:main} across block-Lanczos algorithm iterations between 0 and 100.

We can try to minimize the losses described in the previous section by optimizing the parameters $R$ and $\Theta$ for the Pac-Man contour. 
From \cref{fig:im}, since the magnitude of the oscillation decreases as the point of evaluation moves further from the spectrum of $\vec H$, it seems reasonable that a larger radius $R$ leads to a smaller triangle inequality error.
Similar behavior is observed from \cref{fig:slack}. 
Owing to the symmetry across the real axis, it seems reasonable to choose a real origin. Since the square root function has a branch cut at the negative real axis and the origin is a point for the integral term to be evaluated, the origin cannot be on the negative real axis. But making it too close to the spectrum will also cause our bounds to deteriorate. 

According to the constraints above, we choose our origin to be $O = {\lambda_{\textup{min}}}/{100}$, and allow $R$ and $\Theta$ to vary with $R > \lambda_{\textup{max}}-O$, and $\pi < \theta < 0$.
We then evaluate the performance of \cref{thm:main} with the same matrix $\vec H$, block-vector $\vec V$, and parameter $w = 0$ as in \cref{sec:BS}, and show the results in \cref{fig:RvT}.
In the previous experiments, we have established empirically that block size does not have a large qualitative impact on the behavior of \cref{thm:main} for this example. 
Thus, we choose a fixed block size $b = 4$ to focus on the impacts of the contour.

\begin{figure}
    \centering
    \includegraphics[scale=.6]{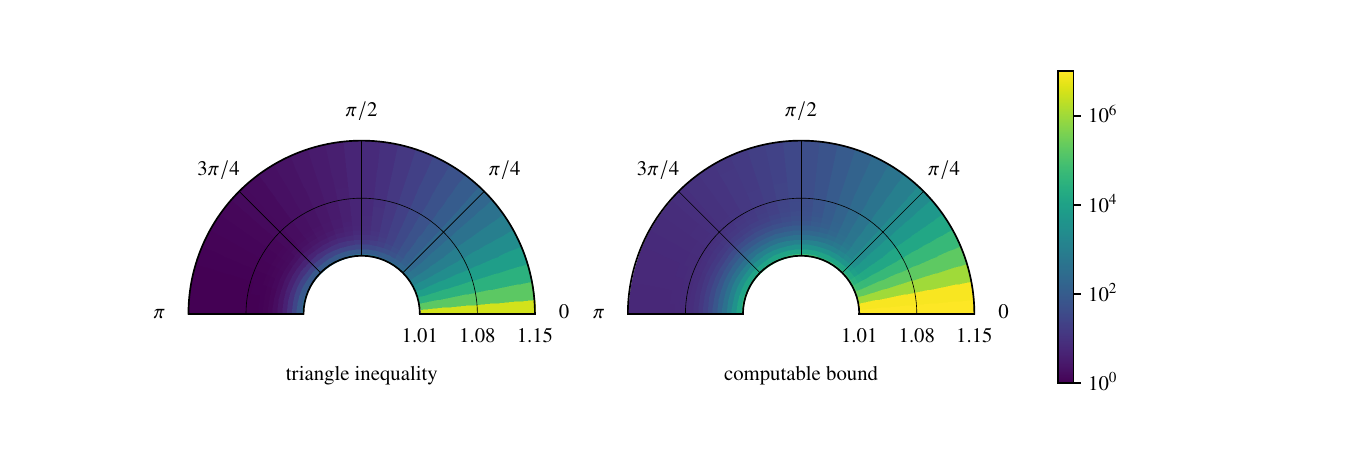}
    \caption{Bound divided by block-Lanczos-FA for the Pac-Man contour across various parameters $R$ and $\Theta$ for $f(x) = \sqrt{x}$ at $k=30$.
    The block-Lanczos-FA error is measured with $w = 0$.
    The bound ``triangle inequality'' is \cref{eqn:triangle_ineq} and the bound ``computable bound'' is \cref{thm:main} evaluated numerically.}
    \label{fig:phase}
\end{figure}

Overall, as $k$ increases, the computable bound remains near or converges to the triangle inequality error, while the triangle inequality error remains near or diverges from the block-Lanczos-FA error. Across each column of \cref{fig:RvT}, we observe that with a larger radius, both the triangle inequality error and computable bound converge to block-Lanczos-FA error, offering a tighter bound, which matches our observations from \cref{fig:im} and \cref{fig:slack}. Across each row of \cref{fig:RvT}, with larger $\Theta$ value, both triangle inequality error and computable bound move further from block-Lanczos-FA error, less to less accurate bound. This is because by the construction of the Pac-Man contour, with smaller $\Theta$ values (especially with $0 < \Theta < \frac{\pi}{2}$), the line between $O$ and $Re^{i{\Theta}}$ and between $O$ and $Re^{-i{\Theta}}$ move closer to the spectrum of $\vec H$. This increases the triangle inequality error and computable bound as observed in \cref{fig:im} and \cref{fig:slack}, which gives us the behavior observed in the last column of \cref{fig:RvT}.

To better visualize our claim, we focus on the performance of \cref{thm:main} with the block-Lanczos algorithm at $k = 30$ iterations. Under the same construction as \cref{fig:RvT}, we plot the ratio between triangle inequality error and block-Lanczos-FA error, and between computable bound and block-Lanczos-FA error across different values of $R$ and $\Theta$ as \cref{fig:phase}. 
We observe that a larger radius leads to a ratio closer to 1 between triangle inequality error and block-Lanczos-FA error and between computable bound and block-Lanczos-FA error, which matches our observation in \cref{fig:RvT}. Furthermore, in general, smaller values of $\Theta$ lead to worse performance. However, when $\Theta > \frac{\pi}{2}$, the performance is rather similar across different values of $\Theta$. 
This matches our expectations and suggests that our bounds do not require significant tuning.

These experiments suggest that, for the given problem, setting $R$ large is advantageous. 
In fact, nothing in the theory prohibits sending $R\to\infty$. 
Numerically, however, this may require more care, and the bounds are already pretty good for moderately large $R$.

\subsection{Sign Function}

We now study the behavior of our bound on the sign function, another function widely used in a number of application areas.
In particular, we consider the approximation of $\operatorname{sign}(\vec{H})\vec{V}$, where $\vec{H}$ is the Wilson fermion matrix from Quantum Chromodynamics (QCD). 
This quantity is required as part of a larger algorithm in QCD \cite{eshof_frommer_lippert_schilling_van_der_vorst_02,frommer_simoncini_08b}.

For this example, we use $\vec{H} = \vec{P}(\vec{I}-\frac{4}{3}k\vec{D})$, where $\vec{D}$ is from the QCD collection of the matrix market with file name \texttt{conf5.0-00l4x4-1000.mtx} with $k = 0.20611$. $\vec{P}$ is the permutation matrix given by 
\[\vec{P} = \vec{I}_3\otimes\begin{pmatrix}0 & 0 & 1 & 0 \\ 0 & 0 & 0 & 1 \\1 & 0 & 0 & 0 \\0 & 1 & 0 & 0 \end{pmatrix}\otimes \vec{I}_{256}.\]

Let $f(x)$ be the step function; $f(x) = 1$ for $x\geq 0$ and $f(x) = 0$ for $x<0$. Then $\operatorname{sign}(x) = 2f(x)-1$.
Since block-Lanczos-FA is linear in the input function, and since the algorithm applies low-degree polynomials (such as $x\mapsto -1$) exactly, 
\[
\operatorname{sign}(\vec{H})\vec{V} - 
\lan_k(\operatorname{sign})
= 2 f(\vec{H}) \vec{V} - \vec{V} - \big( 2\lan_k(f) - \vec{V} \big)
= 2 \big(f\vec{A})\vec{V} - \lan_k(f)\big).
\]
Thus, it suffices to study the behavior of block-Lanczos-FA on the step function.

In order to apply \cref{thm:main}, we require $f$ to be analytic on the interiors of each of the closed curves in the contour $\Gamma$.
Extend $f$ to the complex plane by $f(z) = 1$ for all $z$ such that $\Re(z) \geq 0$, $f(z) = 0$ for all $z$ such that $\Re(z) < 0$.
Assuming $\Lambda(\vec{H})$ and $\Lambda(\vec{T}_k)$ are bounded away from zero, we can take $\Gamma$ as the union of two simple closed curves; one encircling $[\lambda_{\textup{min}},-\epsilon]$ and one encircling $[\epsilon,\lambda_{\textup{max}}]$, where $\epsilon>0$ is some sufficiently small value.
When we apply \cref{thm:main}, the first contour can be ignored, as $f(z) = 0$ for all $z$ on the contour.

\begin{figure}
    \centering
    \includegraphics[scale=.6]{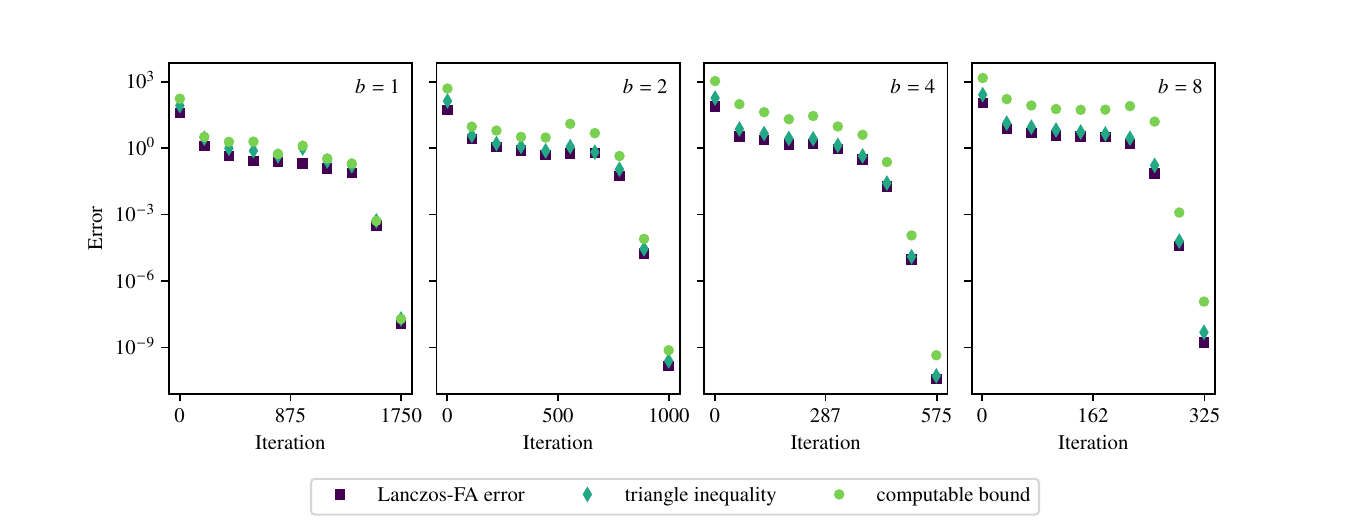}
    \caption{
    Convergence of the block-Lanczos algorithm for block sizes $b = 1, 2, 4, 8$ for the step function $f(x)=1$ if $x>0$ and $f(x) = 0$ if $x\leq 0$. 
    The contour used is the Pan-Man contour centered at 0 with $R = 2\lambda_{\textup{max}}$ and $\Theta = \frac{1}{2}\pi$. 
    The block-Lanczos-FA error is measured using $\|\cdot\|_{\vec{I}} = \| \cdot \|_\F$.
    The bound ``triangle inequality'' is \cref{eqn:triangle_ineq} and the bound ``computable bound'' is \cref{thm:main}, both evaluated numerically.
    While there is some deterioration of the bounds as the block size increases, the computable bound appears suitable for use as a stopping criterion for all of the block sizes we tested.
    }
    \label{fig:QCD_bsize}
\end{figure}

We investigate the impact of block size on this example problem using a Pac-Man contour centered at $0$ with $\Theta = 0.5\pi$ and a radius $R=2\lambda_{\textup{max}}$ large enough to cover all eigenvalues of $\vec{H}$. Note that with $\Theta = 0.5\pi$, the Pac-Man contour shapes like the letter "D" and never intersects with the imaginary axis. 
In \cref{fig:QCD_bsize}, we plot the rate of convergence of the block-Lanczos algorithm and our bound \cref{thm:main}, where $\vec{V}$ is a block-vector of random Gaussian entries. 
We note that the number of iterations required to converge is almost inversely proportional to the block size. 
This shows that having larger block sizes significantly decreases the number of matrix loads without increasing the total number of matrix-vector products significantly. 
Thus, bounds such as \cref{eqn:unif_bound}, which would not change significantly in the different cases, cannot be descriptive of the convergence.
On this example, our bounds are qualitatively similar in all cases, degrading slightly with the block size for the range of block sizes tested.
In this particular example, such degradation is not of major concern, as convergence is very sudden after a certain critical number of iterations have been performed. 

\subsection{Quadratic forms}

In this section we use the same matrix as in \cref{sec:BS} with $f(x) = 1/\sqrt{x}$ and compute bounds for the quadratic form error $\| \vec{V}^\cT f(\vec{H}) \vec{V} - \vec{V}^\cT \lan_k(f) \|_2$.
We use a Pac-Man contour centered at $0$ with $\Theta = 0.5\pi$ and a radius $R=2\lambda_{\textup{max}}$ and test our bounds for several block sizes.
The results of our experiments are shown in \cref{fig:QF_invsqrt_bsize}.
As in other experiments, we observe qualitatively similar results in all cases, with some deterioration as the block size increases. 

\begin{figure}    \centering
    \includegraphics[scale=.6]{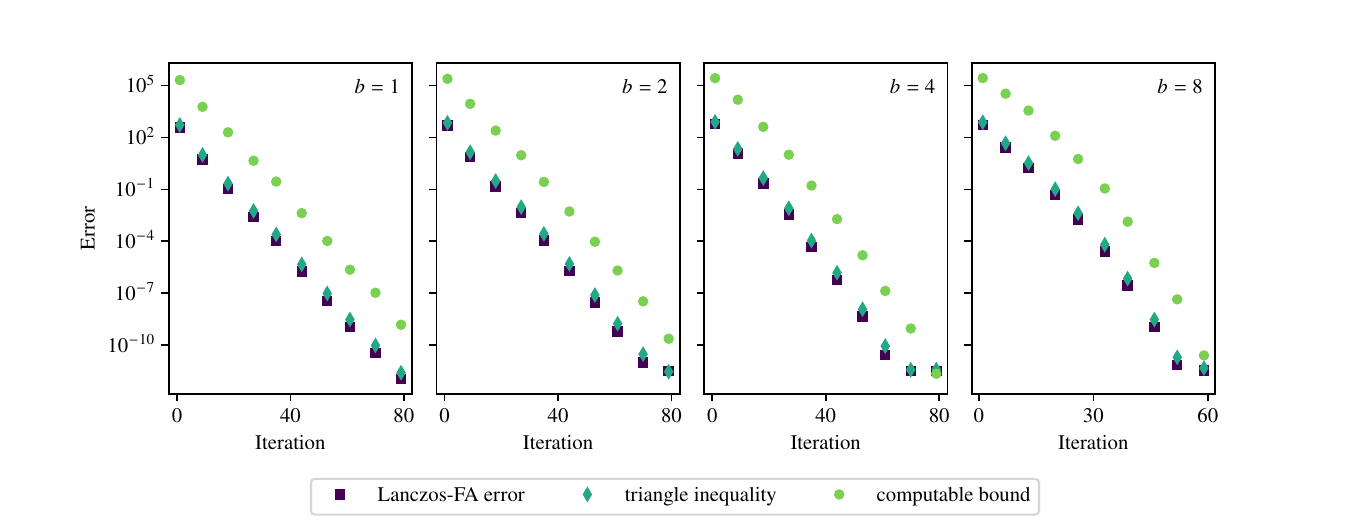}
    \caption{Convergence of the block-Lanczos algorithm for ``quadratic forms'' for block sizes $b = 1, 2, 4, 8$ for $f(x) = 1/\sqrt{x}$.
    The contour used is the Pan-Man contour centered at 0 with $R = 2\lambda_{\textup{max}}$ and $\Theta = \frac{\pi}{2}$. 
    The block-Lanczos-FA error is measured using $\|\cdot\|_{2}$. The bound ``triangle inequality'' is \cref{eqn:QF_triangle} and the bound ``computable bound'' is \cref{eqn:QF_main}, both evaluated numerically.}
    \label{fig:QF_invsqrt_bsize}
\end{figure}

\subsection{Finite precision arithmetic}

In this experiment, we consider the impact of reorthogonalization in the block-Lanczos algorithm on the quality of our bounds.
We again take $\vec{V}$ to have $b=4$ columns with independent Gaussian entries and set the eigenvalues of \( \vec{H} \) to those of the model problem \cite{strakos_91,strakos_greenbaum_92}.
Explicitly, the eigenvalues are
\begin{equation*}
    \lambda_1 = 1/\kappa
    ,\qquad \lambda_n = 1
    ,\qquad \lambda_i = \lambda_1 + \left( \frac{i-1}{n-1} \right) \cdot (\lambda_n -\lambda_1) \cdot \rho^{n-i}
    ,\qquad i=2,\ldots,n-1.
\end{equation*}
In our experiment, we use parameters \( n = 500 \), \( \kappa = 10^3 \), and \( \rho = 0.9 \) and set $f(x) = 1/\sqrt{x}$.
We run the block-Lanczos-FA algorithm to convergence with and without reorthogonalization and report the results of our bounds with a Pac-Man contour with $\Theta = \pi/100$ and $R = 2$ in \cref{fig:RvT_fp}.

\begin{figure}
    \centering
    \includegraphics[scale=.6]{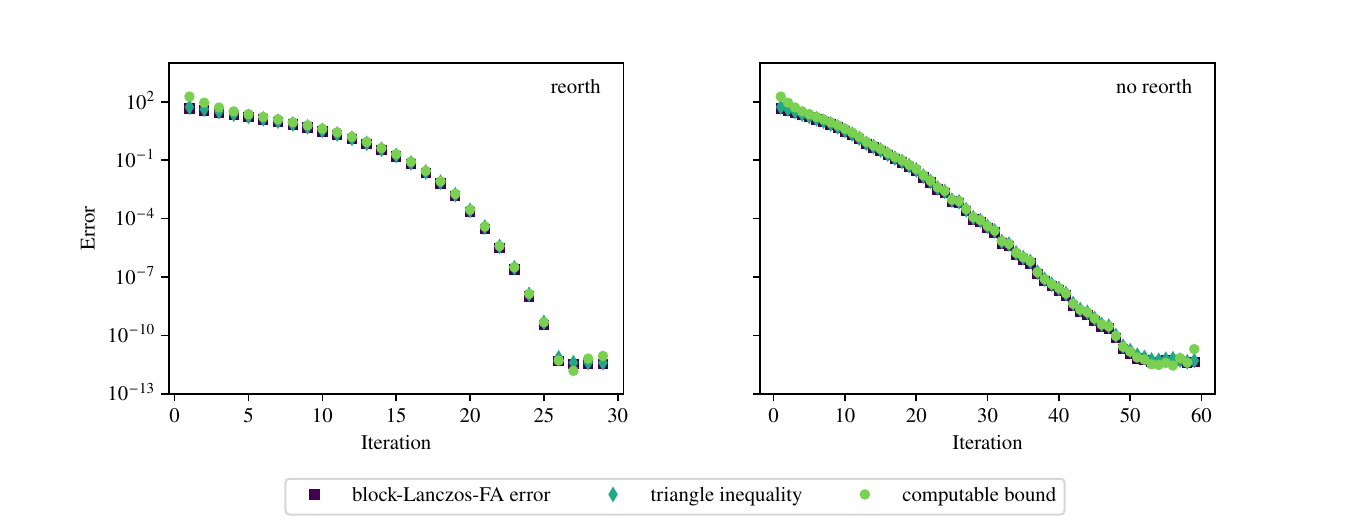}
    \caption{Comparison between the rate of convergence of the block-Lanczos algorithm with and without reorganization for $f(x) = \sqrt{x}$ with block size $b = 4$.
    The contour used is the Pac-Man contour centered at ${\lambda_{\textup{min}}}/{100}$ with $R = 2$ and $\Theta = \frac{1}{100}\pi$.
    The block-Lanczos-FA error is measured with $w = {\lambda_{\textup{min}}}/{100}$. The bound ``triangle inequality'' is \cref{eqn:triangle_ineq} and the bound ``computable bound'' is \cref{thm:main} evaluated numerically.}
    \label{fig:RvT_fp}
\end{figure}

Note that the convergence of the block-Lanczos-FA algorithm is delayed when reorthogonalization is not used. However, the algorithm still converges.
In both cases, our bound performs well, tracking the actual convergence of the algorithm.
This aligns with our expectations from the rounding error analysis performed in \cref{sec:analysis_fp} and provides further evidence supporting the potential value of our bound as a practical stopping criterion.

\section{Conclusion and outlook}

In this paper, we gave an error bound for block-Lanczos-FA used to approximate $f(\vec H)\vec V$ with piece-wise analytic function $f$. 
We analyzed the behavior of our error bound and established that block size does not qualitatively alter the behavior of our error bound; the bounds remain reasonably sharp in all cases.
This allows us to evaluate the behavior of our error bound with a fixed block size, and generalize our findings to other block sizes.
To better understand the impact of contours on our error bound, we investigated the family of the Pac-Man contours and studied its parameters' impact on the slackness of our bound based on our experiments. 
This offered us insights into the qualitative behavior of the contour near the spectrum of $\vec{H}$ and allowed us to choose parameters of the Pac-Man contour that tighten our bound.

In the future, we have two main goals. We will analyze the performance of different combinations of values of $w$ and contour, and look into various other functions, including the sign function, to study the performance of the error bound on a wider range of examples. 

\section*{Declarations}

\paragraph{Ethical Approval}
Not applicable.

\paragraph{Availability of supporting data}
All data generated or analyzed during this study are included in this published article and published on \href{https://github.com/PeterXQC/block_lanczos_CIF}{GitHub}.

\paragraph{Competing interests}
The authors have no competing interests to declare that are relevant to the content of this article.

\paragraph{Authors' contributions}
Analysis and numerical experiments were primarily done by Q.X.. Both authors contributed to writing and reviewing the manuscript.

\bibliography{refs}
\bibliographystyle{siam}
\clearpage

\begin{appendices}
\renewcommand{\thesection}{\arabic{section}.}

\section{Estimates for the block-CG error norm}
\label{sec:cg_est}

In this section, we discuss how to obtain an error estimate for the $(\vec{H}-w\vec{I})$-norm of the error of the block-Lanczos algorithm used to approximate $(\vec{H}-w\vec{I})^{-1}\vec{V}$; i.e. with $f(x)=(x-w)^{-1}$.
For simplicity, without loss of generality,  we will assume $w=0$.
Closely related approaches are widely studied for the block size one case \cite{strakos_tichy_02,meurant_tichy_18,estrin_orban_saunders_19,meurant_papez_tichy_21,meurant_tichy_22}.
Bounds for the block algorithm have also been studied \cite{schaerer_szyld_torrest_21}.

By a simple triangle inequality, we obtain a bound
\begin{align*}
    \| \err_k(w) \|_{\vec{H}} 
    &=  \| \vec{H}^{-1} \vec{V} - \lan_k(f) \|_{\vec{H}}
    \\&\leq \| \vec{H}^{-1}\vec{V} - \lan_{k+d}(f) \|_{\vec{H}} + \| \lan_{k+d}(f) - \lan_k(f)\|_{\vec{H}} .
\end{align*}
If we assume that $d$ is chosen sufficiently large so that 
\begin{equation*}
    \| \vec{H}^{-1}\vec{V} - \lan_{k+d}(f) \|_{\vec{H}} 
    \ll \| \lan_{k+d}(f) - \lan_k(f)\|_{\vec{H}},
\end{equation*}
then we obtain an approximate error bound (error estimate)
\begin{equation}
\label{eqn:cg_bound}
    \| \err_k(w) \|_{\vec{H}} 
    \lesssim \| \lan_{k+d}(f) - \lan_k(f)\|_{\vec{H}}.
\end{equation}

It is natural to ask why we don't simply apply this approach to the block-Lanczos-FA algorithm for any function $f$. 
Such an approach was suggested for the Lanczos-FA algorithm in \cite{eshghi_reichel_21}, and in many cases can work well. 
However, if the convergence of the block-Lanczos-FA algorithm is not monotonic, then the critical assumption that $\lan_{k+d}(f)$ is a much better approximation to $f(\vec{H})\vec{V}$ than $\lan_k(f)$ may not hold.
Perhaps more importantly, the decomposition in \cref{thm:main} also allows intuition about how the distribution of eigenvalues of $\vec{H}$ impact the convergence of block-CG to be extended to the block-Lanczos-FA algorithm.

\begin{figure}[hb]
    \centering
    \includegraphics[scale=.6]{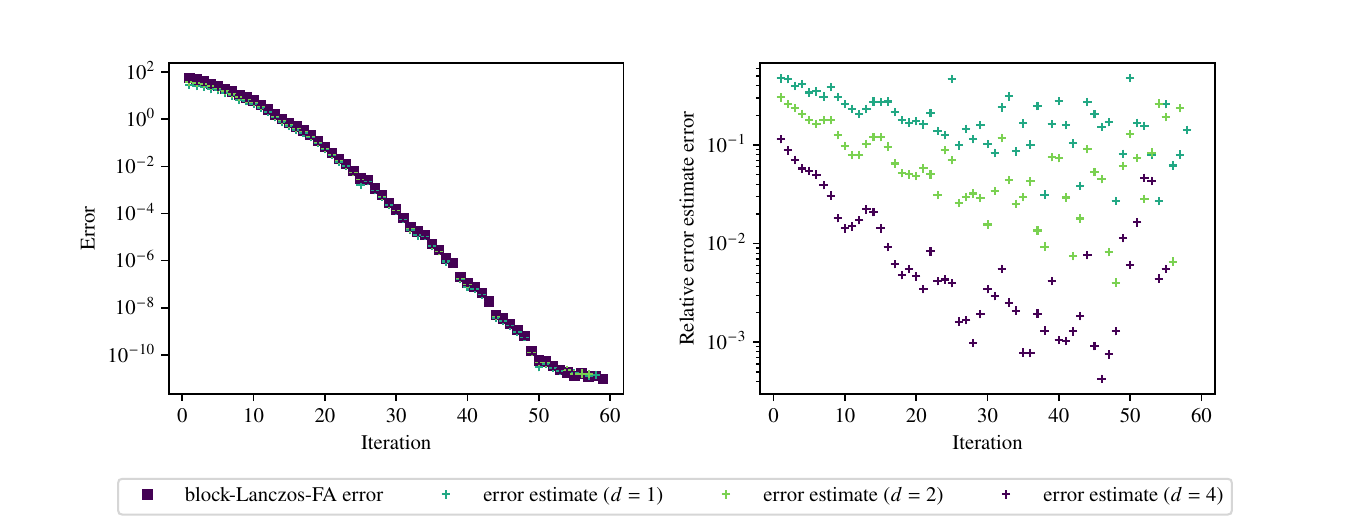}
    \caption{Illustration of the quality of \cref{fig:CG_error_est} for varying $d$.
    The block-Lanczos-FA error is measured using $\|\cdot\|_{\vec{H}-w\vec{I}}$ with $w = 0$.}
    \label{fig:CG_error_est}
\end{figure}

In \cref{fig:CG_error_est} we show a numerical experiment illustrating the quality of \cref{eqn:cg_bound} for several values of $d$.
As expected, increasing $d$ improves the quality of the estimate. 
However, even for small $d$, \cref{fig:CG_error_est} appears suitable for use as a practical stopping criterion on this problem. 
Of course, on problems where the convergence of block-CG stagnates, such an error estimate will require $d$ fairly large.
Further study of this topic is outside the scope of the present paper.

\section{Proofs of known results}
\label{sec:appendix}

\begin{proof}[Proof of \cref{eqn:unif_bound}]

Owing to linearity, in order to show $p(\vec{H})\vec{V} = \lan_k(p)$ for any polynomial $p$ with $\deg(p)<k$, it suffices to consider the case $p(x) = x^j$, for $j=0, \ldots, k-1$. 
Left multiplying \cref{eqn:krylov_recurrence} by $\vec{H}^{j-1}$ and then repeatedly applying \cref{eqn:krylov_recurrence} we find
\begin{align*}
    \vec{H}^j \vec{Q}_k 
    &= \vec{H}^{j-1} \vec{Q}_k \vec{T}_k + \vec{H}^{j-1} \overline{\vec{Q}}_{k+1} \vec{B}_k \vec{E}_k^\cT
    \\&= \vec{H}^{j-2} (\vec{Q}_k \vec{T}_k + \overline{\vec{Q}}_{k+1} \vec{B}_k \vec{E}_k^\cT) \vec{T}_k + \vec{H}^{j-1} \overline{\vec{Q}}_{k+1} \vec{B}_k \vec{E}_k^\cT
    \\&\vdotswithin{=} 
    \\&= \vec{Q}_k \vec{T}_k^{j} + \sum_{i=1}^{j} \vec{H}^{j-i} \overline{\vec{Q}}_{k+1} \vec{B}_k \vec{E}_k^* \vec{T}_k^{i-1}.
\end{align*}
Thus, using that $\vec{V} = \vec{Q}_k \vec{E}_1\vec{B}_0$,
\begin{equation*}
    \vec{H}^j \vec{V} 
    = \vec{H}^j \vec{Q}_k \vec{E}_1\vec{B}_0
    = \vec{Q}_k \vec{T}_k^{j}\vec{E}_1\vec{B}_0 + \sum_{i=1}^{j} \vec{H}^{j-i} \overline{\vec{Q}}_{k+1} \vec{B}_k \vec{E}_k^* \vec{T}_k^{i-1}\vec{E}_1\vec{B}_0.
\end{equation*}
Since $\vec{T}_k$ is banded with half bandwidths $b$, $\vec{T}_k^{i-1}$ is banded with half bandwidth $(i-1)b$.
Thus, the bottom left $b\times b$ block of $\vec{T}_k^{i-1}$ is all zero provided $i-1 \leq k-2$; i.e. $\vec{E}_k^* \vec{T}_k^{i-1}\vec{E}_1\vec{B}_0 = \vec{0}$ provided $i<k$.
We therefore find that for all $j<k$,
\begin{equation*}
    \vec{H}^j \vec{V} 
    =\vec{Q}_k \vec{T}_k^{j}\vec{E}_1\vec{B}_0,
\end{equation*}
from which we infer $p(\vec{H})\vec{V} = \lan_k(p)$ for any $p$ with $\deg(p)<k$.
This is a well-known result; \cite{druskin_knizhnerman_89,saad_92,frommer_lund_szyld_17,frommer_lund_szyld_20}.

Therefore, using the triangle inequality, for any polynomial $p$ with $\deg(p)<k$,
\begin{align*}
    \| f(\vec{H}) \vec{V} - \lan_k(f) \|_2 
    &= \| f(\vec{H}) \vec{V} - \vec{Q}_kf(\vec{T}_k) \vec{E}_1 \vec{B}_0 \|_2
    \\&\leq \| f(\vec{H}) \vec{V} - p(\vec{H}) \vec{V} \| + \| \vec{Q}_k p(\vec{T}_k) \vec{E}_1 \vec{B}_0 - \vec{Q}f(\vec{T}_k) \vec{E}_1 \vec{B}_0 \|_2
    \\&\leq \| f(\vec{H}) - p(\vec{H}) \|_2 \| \vec{V} \|_2 + \| \vec{Q}_k \|_2 \| p(\vec{T}_k) - f(\vec{T}_k) \| \|\vec{E}_1 \vec{B}_0 \|_2.
    \\&\leq \| \vec{V} \|_2 \left( \| f(\vec{H}) - p(\vec{H}) \|_2 +  \| p(\vec{T}_k) - f(\vec{T}_k) \|_2 \right).
\end{align*}
In the final inequality we have use that $\|\vec{Q}_k\|\leq 1$ since $\vec{Q}_k$ has orthonormal columns and that $\|\vec{E}_1 \vec{B}_0\| = \|\vec{Q}_k^\cT \vec{V}\| \leq  \|\vec{V}\|$.
Next, note that
\begin{equation*}
    \| f(\vec{H}) - p(\vec{H}) \|_2
    = \max_{x\in\Lambda(\vec{H})} | f(x) - p(x) |
    ,\qquad
    \| p(\vec{T}_k) - f(\vec{T}_k) \|_2
    = \max_{x\in\Lambda(\vec{T}_k)} | f(x) - p(x) |
\end{equation*}
so, since each of $\Lambda(\vec{H})$ and $\Lambda(\vec{T}_k)$ are contained in $[\lambda_{\textup{min}}(\vec{H}),\lambda_{\textup{max}}(\vec{H})]$, we find 
\begin{align*}
    \| f(\vec{H}) \vec{V} - \lan_k(f) \|_2
    &\leq 2 \| \vec{V} \| \max_{x\in[\lambda_{\textup{min}}(\vec{H}),\lambda_{\textup{max}}(\vec{H})]} | f(x) - p(x) |.
\end{align*}
Since $p$ was arbitrary, we can optimize over all polynomials $p$ with $\deg(p)<k$.    
\end{proof}

\begin{proof}[Proof of \cref{thm:res_prop_Qk1}]
From \cref{def:err_and_res}, 
\begin{equation*}
    \res_k(z) := \vec{V} - (\vec{H}-z\vec{I}) \vec{Q}_k (\vec{T}_k - z\vec{I})^{-1}  \vec{E}_1\vec{B}_0.
\end{equation*}
Using the fact that the Lanczos factorization \cref{eqn:krylov_recurrence} can be shifted, 
\begin{equation*}
    (\vec{H}-z\vec{I}) \vec{Q}_k = \vec{Q}_k(\vec{T}_k-z\vec{I})+\overline{\vec{Q}}_{k+1}\vec{B}_k\vec{E}_k^\cT.
\end{equation*}
Thus, by substitution, we have 
\begin{align*}
    \res_k(z) &= \vec{V} - (\vec{Q}_k(\vec{T}_k-z\vec{I})+\overline{\vec{Q}}_{k+1}\vec{B}_k\vec{E}_k^\cT) (\vec{T}_k - z\vec{I})^{-1}  \vec{E}_1\vec{B}_0\\
    &= \vec{V} - \vec{Q}_k\vec{E}_1\vec{B}_0-\overline{\vec{Q}}_{k+1}\vec{B}_k\vec{E}_k^\cT(\vec{T}_k - z\vec{I})^{-1} \vec{E}_1\vec{B}_0
\end{align*}
Since $\vec{E}_1 = \left[ \vec I_{b}, \vec 0, ..., \vec 0 \right]^\cT$, where $\vec I_{b}$ is the $b\times b$ identity matrix, we know that $\vec Q\vec E_1\vec{B}_0 = \vec Q_1\vec{B}_0 = \vec V$. 
Therefore,
\begin{align*}
    \res_k(z) &= \vec{V} - \vec{Q}_k\vec{E}_1\vec{B}_0-\overline{\vec{Q}}_{k+1}\vec{B}_k\vec{E}_k^\cT(\vec{T}_k - z\vec{I})^{-1}  \vec{E}_1\vec{B}_0\\
    &= -\overline{\vec{Q}}_{k+1}\vec{B}_k\vec{E}_k^\cT(\vec{T}_k - z\vec{I})^{-1}  \vec{E}_1\vec{B}_0.
\end{align*}
The result follows from the definition of $\vec{C}_k(z)$.
\end{proof}

\begin{proof}[Proof of \cref{thm:err_res_shift}]
We have shown in \cref{thm:res_prop_Qk1} that $\res_k(z) = \overline{\vec{Q}}_{k+1}\vec{B}_k \vec{C}_k(z)$.
Thus,  
\begin{equation*}
    \res_k(z) = \overline{\vec{Q}}_{k+1}\vec{B}_k \vec{C}_k(z) = \overline{\vec{Q}}_{k+1}\vec{B}_k \vec{C}_k(w)\vec{C}_k(w)^{-1} \vec{C}_k(z) = \res_k(w) \vec{C}_k(w)^{-1} \vec{C}_k(z).
\end{equation*}
Next, notice that 
\begin{align*}
\err_k(z) = (\vec{H}-z\vec{I})^{-1}\res_k(z)
    &=(\vec{H}-z\vec{I})^{-1}\res_k(w)\vec{C}_k(w)^{-1} \vec{C}_k(z)\\
    &= h_{w, z}(\vec{H})(\vec{H}-w\vec{I})^{-1}\res_k(w)\vec{C}_k(w)^{-1} \vec{C}_k(z)\\
    &=h_{w, z}(\vec{H})\err_k(w)\vec{C}_k(w)^{-1} \vec{C}_k(z). 
\end{align*}
The result is established.
\end{proof}
\end{appendices}
\end{document}